\sloppy\allowdisplaybreaks[4]
   \def\cD{{\cal D}}
\def\dbE{\mathbb{E}}   
\def\dbF{\mathbb{F}} \def\sF{\mathscr{F}}  \def\cF{{\cal F}}
\def\dbH{\mathbb{H}}
   \def\cM{{\cal M}}
\def\dbP{\mathbb{P}}   
\def\dbR{\mathbb{R}}   
\def\dbS{\mathbb{S}}   
   \def\cU{{\cal U}}
\def\ss{\smallskip}             \def\hb{\hbox}
\def\ms{\medskip}              
          \def\lan{\langle}       
\def\h{\widehat}     \def\ran{\rangle}       
\def\q{\quad}               
\def\qq{\qquad}             
         \def\sc{\scriptscriptstyle}
         \def\scT{\sc T}
     \def\ds{\displaystyle}
\def\rf{\eqref}        
\def\cd{\cdot}        \def\({\Big(}           
       \def\){\Big)}           \def\les{\leqslant}
\def\wt{\widetilde}   \def\[{\Big[}           \def\ges{\geqslant}
  \def\]{\Big]}           
\def\a{\alpha}       \def\l{\lambda}       
\def\b{\beta}                    
\def\d{\delta}              
\def\e{\varepsilon}      \def\L{\varLambda}
\def\f{\varphi}           
       \def\si{\sigma}    \def\Si{\varSigma}
\def\i{\infty}       \def\z{\zeta}      \def\Th{\varTheta}
\def\be{\begin{equation}}
\def\bel{\begin{equation}\label}
\def\ee{\end{equation}}
\def\bea{\begin{eqnarray}}
\def\eea{\end{eqnarray}}
\def\bt{\begin{theorem}}
\def\et{\end{theorem}}
\def\bc{\begin{corollary}}
\def\ec{\end{corollary}}
\def\bl{\begin{lemma}}
\def\el{\end{lemma}}
\def\bp{\begin{proposition}}
\def\ep{\end{proposition}}
\def\br{\begin{remark}}
\def\er{\end{remark}}
\def\ba{\begin{array}}
\def\ea{\end{array}}
\def\bd{\begin{definition}}
\def\ed{\end{definition}}
\def\5n{\negthinspace \negthinspace \negthinspace \negthinspace \negthinspace }
\def\4n{\negthinspace \negthinspace \negthinspace \negthinspace }
\def\3n{\negthinspace \negthinspace \negthinspace }
\def\2n{\negthinspace \negthinspace }
\def\1n{\negthinspace }
\newtheorem{theorem}{Theorem}[section]
\newtheorem{proposition}[theorem]{Proposition}
\newtheorem{corollary}[theorem]{Corollary}
\newtheorem{lemma}[theorem]{Lemma}
\theoremstyle{definition}
\newtheorem{definition}[theorem]{Definition}
\newtheorem{example}[theorem]{Example}
\newtheorem{assumption}[theorem]{Assumption}
\theoremstyle{remark}
\newtheorem{remark}[theorem]{Remark}
\def\punct{}
\newtheoremstyle{dotless}{}{}{\rm}{}{\bf}{\punct}{.5em}{}
\theoremstyle{dotless}
   \newcommand{\setword}[2]{%
   \phantomsection
   #1\def\@currentlabel{\unexpanded{#1}}\label{#2}%
   }
\def\3n{\negthinspace \negthinspace \negthinspace }
\def\2n{\negthinspace \negthinspace }
\def\1n{\negthinspace }
\def\bel{\begin{equation}\label}
\def\dbE{\mathbb{E}}
\def\dbF{\mathbb{F}}
\def\dbH{\mathbb{H}}
\def\dbP{\mathbb{P}}
\def\dbR{\mathbb{R}}
\def\dbS{\mathbb{S}}
\def\sF{\mathscr{F}}
\def\ds{\displaystyle}
\def\ns{\noalign{\ss}}
\def\a{\alpha}
\def\b{\beta }
\def\d{\delta}
\def\e{\varepsilon}
\def\z{\zeta}
\def\l{\lambda}
\def\si{\sigma}
\def\f{\varphi}
\def\i{\infty}
\def\Th{\Theta}
\def\L{\Lambda}
\def\Si{\Sigma}
\def\O{\Omega}
\def\cD{{\cal D}}
\def\cF{{\cal F}}
\def\cM{{\cal M}}
\def\cU{{\cal U}}
\def\BS{{\bf S}}
\def\Th{\Theta}
\def\L{\Lambda}
\def\Si{\Sigma}
\def\O{\Omega}
\def\ss{\smallskip}
\def\ms{\medskip}
\def\q{\quad}
\def\qq{\qquad}
\def\hb{\hbox}
\def\h1{\outline{$1$}}
\def\hh1{\outline{$1$}}
\def\hh2{\outline{$2$}}
\def\hh3{\outline{$3$}}
\def\hh4{\outline{$4$}}
\def\hh5{\outline{$5$}}
\def\hh6{\outline{$6$}}
\def\hh7{\outline{$7$}}
\def\hh8{\outline{$8$}}
\def\hh9{\outline{$9$}}
\def\hh0{\outline{$0$}}
\def\limsup{\mathop{\overline{\rm lim}}}
\def\liminf{\mathop{\underline{\rm lim}}}
\def\lan{{\langle}}
\def\ran{{\rangle}}
\def\h{\widehat}
\def\wt{\widetilde}
\def\cd{\cdot}
\def\cds{\cdots}
\def\les{\leqslant}
\def\ges{\geqslant}
\def\({\Big (}
\def\){\Big )}
\def\[{\Big[}
\def\]{\Big]}
\def\lan{\langle}
\def\ran{\rangle}
\def\bde{\begin{definition}\label}
	\def\ede{\end{definition}}
\def\bel{\begin{equation}\label}
		\def\ee{\end{equation}}
	\def\bt{\begin{theorem}\label}
		\def\et{\end{theorem}}
	\def\bc{\begin{corollary}\label}
		\def\ec{\end{corollary}}
	\def\bl{\begin{lemma}\label}
		\def\el{\end{lemma}}
	\def\bp{\begin{proposition}\label}
		\def\ep{\end{proposition}}
	\def\bex{\begin{example}\label}
		\def\ex{\end{example}}
	\def\bas{\begin{assumption}}
		\def\eas{\end{assumption}}
	\def\br{\begin{remark}\label}
		\def\er{\end{remark}}
	\def\ba{\begin{array}}
		\def\ea{\end{array}}
	\def\ed{\end{document}}
\def\rf{\eqref}
\def\square#1{\vbox{\hrule\hbox{\vrule height#1%
			\kern#1\vrule}\hrule}}
\def\rectangle#1#2{\vbox{\hrule\hbox{\vrule height#1%
			\kern#2\vrule}\hrule}}
\font\tenbb=msbm10 \font\sevenbb=msbm7 \font\fivebb=msbm5
\newbox\qbox
\def\usecolor#1{\csname\string\color@#1\endcsname\space}
\newcommand\bordercolor[1]{\colsplit{1}{#1}}
\newcommand\fillcolor[1]{\colsplit{0}{#1}}
\newcommand\outline[1]{\leavevmode%
	\def\maltext{#1}%
	\setbox\qbox=\hbox{\maltext}%
	\boxgs{Q q 2 Tr \thickness\space w \fillcol\space \bordercol\space}{}%
	\copy\qbox%
}
\newcommand\colsplit[2]{\colorlet{tmpcolor}{#2}\edef\tmp{\usecolor{tmpcolor}}%
	\def\tmpB{}\expandafter\colsplithelp\tmp\relax%
	\ifnum0=#1\relax\edef\fillcol{\tmpB}\else\edef\bordercol{\tmpC}\fi}
\def\colsplithelp#1#2 #3\relax{%
	\edef\tmpB{\tmpB#1#2 }%
	\ifnum `#1>`9\relax\def\tmpC{#3}\else\colsplithelp#3\relax\fi
}
\def\thickness{.3}
\def\h1{\outline{$1$}}
\begin{document}

\title{\bf Turnpike Property of  Mean-Field Linear-Quadratic Optimal Control Problems in Infinite-Horizon with Regime Switching}
		
		\author{  Hongwei Mei\footnote{ Department of Mathematics and Statistics, Texas Tech University, Lubbock, TX 79409, USA; email: {\tt hongwei.mei@ttu.edu}. This author is partially supported by Simons Travel Grant MP-TSM-00002835.},~~ Svetlozar Rachev\footnote{ Department of Mathematics and Statistics, Texas Tech University, Lubbock, TX 79409, USA; email: {\tt zari.rachev@ttu.edu}}~~ and ~ Rui Wang\footnote{ Department of Mathematics and Statistics, Texas Tech University, Lubbock, TX 79409, USA; email: {\tt rui-math.wang@ttu.edu}}}
			
\maketitle

\paragraph{Abstract.} This paper considers an optimal control problem for a linear mean-field stochastic differential equation having regime switching with quadratic functional in the large time horizons. Our main contribution lies in  establishing the strong turnpike property for the optimal pairs when the time horizon tends to infinity. To work with the mean-field terms,  we apply the orthogonal decomposition method to derive a closed-loop representation of the optimal control problem in a finite time horizon. To analyze the asymptotic behavior of the optimal controls, we examine the convergence of the solutions of  Riccati equations and backward differential equations as the time horizon tends to infinity. The strong turnpike property can be obtained based on these convergence results. Finally, we verify the optimality of the limit optimal pair in two cases: integrable case and local-integrable case.

\ms
\paragraph{Keywords.}

Turnpike property, linear-quadratic, mean-field,  regime switching, Riccati equation, backward differential equation

\ms
\paragraph{AMS 2020 Mathematics Subject Classification.}  49N10, 60F15, 93E15, 93E20.
\newpage

\section{Introduction}

Let $(\Omega,\sF,\dbP)$ be a complete probability space on which a standard one-dimensional Brownian motion $W=\{W(t);\,t\ges 0\}$ and a  Markov chain $\a(\cd)$ with a finite state space $\cM=\{1,2,3,\cds,m_0\}$ are defined, for which they are assumed to be independent. The generator of $\a(\cd)$ is denoted by $(\l_{\imath\jmath})_{m_0\times m_0}$. We now denote by $\dbF^W=\{\sF^W_{t}\}_{t\ges0}$ (resp. $\dbF^\a=\{\sF_{t}^\a\}_{t\ges0}$, $\dbF=\{\sF_{t}\}_{t\ges0})$ the usual augmentation of the natural filtration generated by $W(\cd)$ (resp. by $\a(\cd)$, and by $(W(\cd),\a(\cd))$). Write
\begin{align*}
& \dbE_t^\a[~\cd~]=\dbE[~\cd~|\cF_t^\a].\end{align*}

Consider the following controlled mean-field linear stochastic differential equation (MF-SDE, for short), with regime switchings:
\begin{align}\label{state} \begin{cases}&dX(t)=\Big[A(\a(t))X(t)+\bar A(\a(t))\dbE_t^\a[X(t)]\\
&\qq\qq\qq\qq+B(\a(t))u(t)+\bar B(\a(t))\dbE_t^\a[u(t)]+b(t)\Big]dt\\
&\q+\Big[C(\a(t))X(t)+\bar C(\a(t))\dbE_t^\a[X(t)]\\
&\qq\qq\qq\qq+D(\a(t))u(t)+\bar B(\a(t))\dbE_t^\a[u(t)]+\si(t)\Big]dW(t),\q t\geq s\\
& X(s)=x,\q\a(s)=\imath,
\end{cases}
\end{align}
under  the following quadratic cost functional
\begin{align}\label{cost}
& J_{\scT}(s,x,\imath;u(\cd)) =\dbE \int_s^{T}
\Big[\begin{pmatrix}X(t)\\u(t)\end{pmatrix}^\top\begin{pmatrix}Q(\a(t))& S^\top(\a(t))\\
S (\a(t))&R(\a(t))\end{pmatrix}
\begin{pmatrix}X(t)\\u(t)\end{pmatrix}\nonumber
\\
&\qq\qq
+\begin{pmatrix}\dbE_t^\a[X(t)]\\\dbE_t^\a[u(t)]\end{pmatrix}^\top\begin{pmatrix}\bar Q(\a(t))& \bar S^\top(\a(t))\\
\bar S (\a(t))&\bar R(\a(t))
\end{pmatrix}\begin{pmatrix}\dbE_t^\a[X(t)]\\\dbE_t^\a[u(t)]\end{pmatrix}
  \nonumber\\
 &\qq\qq +\lan q(t), X(t)\ran+\lan \bar q(t), \dbE_t^\a[X(t)]\ran+\lan r(t), u(t)\ran+\lan \bar r(t), \dbE_t^\a[u(t)]\ran\Big]dt.
 \end{align}

We assume the following throughout the paper.\ms

{\noindent \bf (A1)}.  (1) $A(\cd),\bar A(\cd), C(\cd),\bar C(\cd) :\cM\to\dbR^{n\times n}$; $B(\cd),\bar B(\cd),D(\cd),\bar D(\cd):\cM\to\dbR^{m\times n}$.

(2) $Q(\cd), \bar Q(\cd):\cM\mapsto \dbS^n$; $R(\cd),\bar R(\cd):\cM\to \dbS^n$; $S(\cd),\bar S(\cd):\cM\to \dbR^{n\times m}$.

(3) $q(\cd),\bar q(\cd)\in L_{\dbF}^2(0,T;\dbR^n)$, $r(\cd),\bar r(\cd)\in L_{\dbF}^2(0,T;\dbR^n)$ for any $T>0.$

\ms

Here, the superscript $\top$ denotes the transpose of matrices; $\lan\cd\,,\cd\ran$ denotes the inner product of two vectors (possibly in different spaces). The $\dbS^n$, $\dbS^n_+$ and $\dbS^n_{++}$ are defined by  the sets of all $(n\times n)$ symmetric, positive semi-definite, and positive definite matrices, respectively. For any Euclidean space $\dbH$ (such as $\dbR^n,\dbR^{n\times m}$, etc.),
\begin{align*}& L^2_\dbF(s,T;\dbH):=\Big\{\f:[s,T]\times\O\to\dbH\bigm|\f(\cd)\hb{ is $\dbF$-progressively measurable}\\
&\qq\qq\qq\qq\text{ with }\dbE\int_s^T|\f(t)|_\dbH^2dt
<\i\Big\}.\end{align*}
%
We also write $L_{\cF_{t}}^2(\dbH)$ by the set of $\cF_t$-measurable, $\dbH$-valued random variables with finite second moment.

 In \rf{state}, any $(s,x,\imath)$ is called an {\it initial pair} if $(x,\imath)\in  L_{\cF_{s}}^2(\dbR^n)\times\cM$ and $s\in[0,\i)$. Write the set of all initial pairs by $\cD$. When $T<\infty$, 
the  {\it control} process $u(\cd)$ is taken from the space
$$\cU[s,T]=L_\dbF^2(s,T;\dbR^m).$$
Provided (A1), it is well-known that for each $(s,x,\imath)\in\cD$ and $u(\cd)\in\cU[s,T]$, \rf{state} admits a unique solution $X(\cd)\equiv X(\cd\,;s,x,\imath;u(\cd))\in L_{\dbF}^2(s,T;\dbR^n)$. Consequently, the cost functional  $J_{\scT}(s,x,\imath;u(\cd))$ is finite for all $u(\cd)\in\cU[s,T]$. Then it is natural to consider the following optimal control problem.

\ms

{\bf Problem (MF-LQ)$_{\scT}$.} For a given initial pair $(s,x,\imath)\in\cD$, find a control $\bar u_{\scT}^{s,x,\imath}(\cd)\in\cU[s,T]$ such that
\bel{opt-u3}J_{\scT}(s,x,\imath;\bar u_{\scT}^{s,x,\imath}(\cd))=\inf_{u(\cd)\in\cU[s,T]}J_{\scT}(s,x,\imath;u(\cd))\equiv V_{\scT}(s,x,\imath).\ee

 Problem (MF-LQ)$_{\scT}$ is usually referred to as   {\it mean-field linear-quadratic} (MF-LQ, for short) {\it optimal control problems} with regime switchings over a finite horizon.   Under some mild conditions,  Problem (MF-LQ)$_{\scT}$  admits a unique (open-loop) optimal control $\bar u_{\scT}^{s,x,\imath}(\cd)\in \cU[s,T]$. Write $\bar X_{\scT}^{s,x,\imath}(\cd)$ by the corresponding {\it  optimal state processes}.

For the cases without mean-field terms, it is proven in \cite{Mei-Wang-Yong-2025a,Mei-Wang-Yong-2025b} that  there exists some stochastic processes $(\bar X_{\sc\i}(\cd),\bar u_{\sc\i}(\cd))$ with initial $(0,x,\imath)$, some absolute constants $\b,K>0$, and a function $h(\cd):[0,\i)\rightarrow [0,\infty)$ independent of $0<T<\i$, such that
\bel{STP0}\dbE\(|\bar X_{\scT}^{0,x,\imath }(t)-\bar X_{\sc\i}(t)|^2+\int_0^t|\bar u_{\scT}^{0,x,\imath}(r)-\bar u_{\sc\i}(r)|^2dr\)\les Ke^{-\b(T-t)}\(e^{-\b t}|x|^2+h(t)\),\ee
for all  $t\in[0,T]$.  Such an asymptotic behavior is called the {\it strong turnpike property} (STP, for short) for the optimal pair $(\bar X_{\scT}^{0,x,\imath}(\cd),\bar u_{\scT}^{0,x,\imath}(\cd))$  as $T\to\i$.

Investigation on turnpike property (for deterministic economics systems) can be dated back to von Neumann \cite{Ramsey-1928, Neumann-1945} where the name {\it turnpike property} intuitively was suggested by the highway system of the United States in \cite{Dorfman-Samuelson-Solow-1958}. Since then the turnpike property has been found to hold for a large class of (deterministic, finite or infinite dimensional) optimal control problems. Numerous relevant results can be found in \cite{McKenzie-1976,Carlson-Haurie-Leizarowitz-1991,Damm-Grune-Stieler-Worthmann-2014,
Trelat-Zuazua-2015,Zuazua-2017,Grune-Guglielmi-2018,Zaslavski-2019,
Lou-Wang-2019,Breiten-Pfeiffer-2020,Sakamoto-Zuazua-2021,Faulwasser-Grune-2022} and the references cited therein.  In particular,  \cite{Carlson-Haurie-Leizarowitz-1991} deals with some stochastic systems with jumps and the corresponding turnpike property was studied. At about the same time, certain stability for a finite time horizon multi-person discrete stochastic game was investigated and using the idea of turnpike property, it was shown the existence of an equilibrium for a stationary (discrete random) games (\cite{Marimon-1989}). For  continuous-time stochastic optimal LQ control problems, one can refer to  \cite{Sun-Wang-Yong-2022,Conforti-2023,Chen-Luo-2023,Sun-Yong-2024,Sun-Yong-2024b,
Jian-Jin-Song-Yong-2024,Schiessl-Baumann-Faulwasser-Grune-2024,
Bayraktar-Jian-2025}. For stochastic LQ systems with Markovian jumps, \cite{Mei-Wang-Yong-2025a, Mei-Wang-Yong-2025b}  generalizes the Turnpike property to three different cases: homogeneous cases, integrable cases and non-integrable cases. In the homogeneous cases, the main effort is devoted to proving the exponential convergence of Riccati equation, while in the latter two cases, the main focus is placed on the convergence of backward stochastic differential equations (BSDEs). It is worth mentioning that the non-homogeneous coefficients in \cite{Mei-Wang-Yong-2025b} are allowed to be a  stochastic process instead of deterministic constants which are necessary in the previous literature such as \cite{Sun-Yong-2024,Sun-Yong-2024b}.  Recently, \cite{Li-Wu-Zhang-2025} establishes the similar results for homogeneous two player zero-sum games.  

In this paper, we focus on the mean-field stochastic optimal control with switching \eqref{state} under the cost functional \eqref{cost}, which generals the LQ optimal control problem studied in  \cite{Mei-Wang-Yong-2025a, Mei-Wang-Yong-2025b}. In particular,  the  mean-field interactions $\dbE^\a_t [X(t)]$ and $\dbE^\a_t[u(t)]$ are involved. Similarly, we will prove two types of asymptotic behaviors of the open-loop optimal pair to Problem (MF-LQ)$_{\scT}$:

\ms

$\bullet$ {\bf Integrable Case}: $b(\cd),\si(\cd),q(\cd),\bar q(\cd)\in L_\dbF^2(0,\i;\dbR^n)$ and $r(\cd),\bar r(\cd)\in L_\dbF^2(0,\i;\dbR^m)$. In this case, $h(\cd)$ is a non-negative integrable function on $[0,\i)$. In particular, when $b(\cd),\si(\cd),q(\cd),r(\cd), \bar q(\cd),\bar r(\cd)$ are all 0 (i.e. homogeneous case), we have  $h(t)\equiv0$. In the integrable case, we will see that the  $(\bar X_{\sc\i}(\cd),\bar u_{\sc\i}(\cd))$ is the optimal control for an infinite-horizon problem.

\ms

$\bullet$ {\bf Local-Integrable Case}: For any $0<T<\i$, $b(\cd),\si(\cd),q(\cd),\bar q(\cd)\in L_\dbF^2(0,T;\dbR^n)$ and $r(\cd),\bar r(\cd)\in L_\dbF^2(0,T;\dbR^m)$ with some additional assumptions. In this case, we can take $h(t)\equiv1$. In the local-integrable case, we will see that the  $(\bar X_{\sc\i}(\cd),\bar u_{\sc\i}(\cd))$ is the optimal control for an ergodic control problem.
\ms

Based on  \cite{Mei-Wang-Yong-2025a, Mei-Wang-Yong-2025b}, the main novelty of this paper lies in studying the convergence of a system of Riccati equations and mean-field BSDEs. The rest of the paper is arranged as follows. In Section \ref{sec:opt}, we  obtain an equivalent formulation of Problem (MF-LQ)$_{\scT}$ through  orthogonal decomposition method to  derive the closed-loop representation of the optimal control. Then Section \ref{sec:abo} studies the asymptotic behavior of the optimal controls as $T\rightarrow\infty$ where the main efforts are placed on the convergence of Riccati equations and mean-field BSDEs. Our main results on STP are proved in Section \ref{sec:stp}, together with  the corresponding optimality for integrable case and local-integrable case. Finally, some concluding remarks are made in Section \ref{sec:con}.

\section{Optimal Control for Problem (MF-LQ)$_{\scT}$}\label{sec:opt}

In this section, we will recall some results in \cite{Mei-Wei-Yong-2025} on the optimal control $(\bar X_{\scT}^{s,x,\imath}(\cd),\bar u_{\scT}^{s,x,\imath}(\cd))$ for Problem (MF-LQ)$_{\scT}$. The section is divided into several subsections.

\subsection{Martingale Measure}
Recall that $\a(\cd)$ is a Markov chain whose state space $\cM$ is finite. Thus, we may let its generator be $(\l_{\imath\jmath})_{m_0\times m_0}\in\dbR^{m_0\times m_0}$, which is a real matrix so that the following hold:
\bel{q-prop}\l_{\imath\jmath}>0,\q\imath\ne\jmath;\qq\sum_{\jmath=1}^{m_0}
\l_{\imath\jmath}=0,\q\imath\in\cM.\ee
We now proceed with a {\it martingale measure of Markov chain} $\a(\cd)$. For $\imath\ne\jmath$, we define
$$\ba{ll}
\ns\ds\wt M_{\imath\jmath}(t):=\sum_{0\leq s\les t}{\bf1}_{[\a(s_-)=\imath]}{\bf1}_{[\a(s)=\jmath]}\equiv\hb{accumulative jump number from $\imath$ to $\jmath$ 
in $(0,t]$},\\
\ns\ds\lan\wt M_{\imath\jmath}\ran(t):=\int_0^t
\l_{\imath\jmath}{\bf1}_{[\a(s-)=\imath]}ds,
\q M_{\imath\jmath}(t):=\wt M_{\imath\jmath}(t)-\lan\wt M_{\imath\jmath}\ran(t),\qq s\ges0.\ea$$
The above $M_{\imath\jmath}(\cd)$ is a square-integrable martingale (with respect to $\dbF^\a$). For convenience, we let
$$M_{\imath\imath}(t)=\wt M_{\imath\imath}(t)=\lan\wt M_{\imath\imath}\ran(t)=0,\qq s\ges0.$$
Then $\{M_{\imath\jmath}(\cd)\bigm|\imath,\jmath\in\cM\}$ is the {\it martingale measure} of Markov chain $\a(\cd)$. 

Now, let $\dbF_-$ be the smallest filtration containing $\{\cF_t^W\}_{t\ges 0}$ and $\{\cF_{t-}^\a\}_{t\ges 0}$ augumented with all $\dbP$-null sets.
To define the stochastic integral with respect to such a martingale measure, we need to introduce the following Hilbert spaces
$$\ba{ll}
\ns\ds M^2_{\dbF_-}(t,T;\dbH)=\Big\{\f(\cd\,,\cd)=(\f(\cd\,,1),\cds,\f(\cd\,,m_0))\bigm|
\f(\cd\,,\cd)\hb{ is $\dbH$-valued and $\dbF_-$-measurable }\\
\ns\ds\qq\qq\qq\qq\qq \hb{ with } \dbE\int_t^T\sum_{\imath\neq\jmath}|
\f(s,\jmath)|^2\l_{\imath\jmath}{\bf1}_{[\a(s^-)=\imath]}d\widehat M_{ij}(s)<
\i,\q\forall\imath,\jmath\in\cM
\Big\}.\ea$$
Now, for any $\f(\cd)\in M^2_{\dbF_-}(t,T;\dbH)$, we define its stochastic integral against $dM$ by the following:
$$\int_t^T\f(s)dM(s):=\sum_{\jmath\ne\imath}\int_{[t,T]}\f(r,\jmath)
{\bf1}_{[\a(s^-)=\imath]}dM_{\imath\jmath}(s),$$
whose quadratic variation is
$$\dbE\(\int_t^T\f(s)dM(s)\)^2=\dbE\int_t^T\sum_{\imath\neq\jmath}|
\f(s,\jmath)|^2\l_{\imath\jmath}{\bf1}_{[\a(s)=\imath]}ds.
$$

\subsection{Orthgonal Decomposition}
In  this section, we will derive an equivalent formulation for Problem (MF-LQ)$_{\scT}$. In addition, we will also propose two optimal control problems over the infinite horizon, which will be used in verifying the optimality for the limit process in the later section.

 For any $\varphi(\cd)\in L^2_{\dbF}(s,T;\dbH)$, define 
$$\Pi[\varphi](t)=\dbE^\a_t[\varphi(t)],\text{ for each $t\in(s,T].$}$$
Note that $\Pi[\varphi](t)\in\cF_t^\a$ and the definition is in point-wise sense. For any $\varphi_{\sc 1}(\cd)=\varphi_2(\cd)\in L^2_{\dbF}(s,T;\dbH)$,  it follows that 
$$\dbE\int_s^T\big|\Pi[\varphi_{\sc 1}](t)-\Pi[\varphi_{\sc 2}](t)\big|^2dt\les\dbE\int_s^T
		\big|\varphi_{\sc 1}(t) -\varphi_{\sc 2}(t)\big|^2dt=0.$$
This yields that $\Pi$ defines a linear map from 
$L^2_{\dbF}(s,T;\dbH)$ to $L^2_{\dbF^\a}(s,T;\dbH)$. Note that  for any $\varphi(\cd)\in L^2_{\dbF}(s,T;\dbH)$,
$$\int_s^T\lan \Pi[\varphi](t),\varphi(t)-\Pi[\varphi](t)\ran dt=0.$$
Therefore $\Pi$ induces the following orthogonal decomposition 
\begin{align*}L^2_{\dbF}(s,T;\dbH)= L^2_{\dbF^\a}(s,T;\dbH)^\perp\oplus L^2_{\dbF^\a}(s,T;\dbH)
		\end{align*}
It can be easily seen that the above also holds for $T=\i.$ With such a decomposition, we will reformulate Problem (MF-LQ)$_{\scT}$ in the product space instead.

Now we apply the orthogonal decomposition on Problem (MF-LQ)$_{\scT}$. Write 
\begin{align*}\begin{cases}&X_{\sc 1}(t)=X(t)-\dbE_t^\a[X(t)],\q X_{\sc 2}(t)=\dbE_t^\a[X(t)],\\
&u_{\sc 1}(t)=u(t)-\dbE_t^\a[u(t)],\q u_{\sc 2}(t)=\dbE_t^\a[u(t)],\\
&x_{\sc 1}=x-\dbE_s^\a[x],\q x_{\sc 2}=\dbE_s^\a[x].\end{cases}
\end{align*}
 By Lemma A.1 in \cite{Mei-Wei-Yong-2024}, we have 
\begin{align}\label{SDE1ho}\begin{cases}
& dX_{\sc 1}(t)=[A_{\sc 1}(\a(t)) X_{\sc 1}(t)+B_{\sc 1}(\a(t))u_{\sc 1}(t)+b_1(t)]dt\\
&	 \q+[C_{\sc 1}(\a(t)) X_{\sc 1}(t)+C_{\sc 2}(\a(t)) X_{\sc 2}+D_{\sc 1}(\a(t))u_{\sc 1}(t)+D_{\sc 2}(\a(t))u_{\sc 2}(t)+\si(t)]dW(t),\\
& dX_{\sc 2}(t)=[A_{\sc 2}(\a(t))X_{\sc 2}(t)+B_{\sc 2}(\a(t))u_{\sc 2}(t)+b_2(t)]dt, \q t\in[s,T],\\
& X_{\sc 1}(s)=x_{\sc 1},\q X_{\sc 2}(s)=x_{\sc 2}, \q \a(s)=\imath.\end{cases}\end{align}
The cost functional \rf{cost} can be written as
\begin{align}\label{cost1}
&J_{\scT}(s, x_{\sc 1}\oplus x_{\sc 2},\imath;u_{\sc 1}(\cd)\oplus u_{\sc 2}(\cd)):=J_{\scT}(s, x,\imath,;u(\cd))\nonumber\\
&\q=\sum_{k=1}^2\dbE  \int_s^T\[\lan  Q_k(\a(t))X_k(t),X_k(t)\ran +2\lan S_k(\a(t))X_k(t),u_k(t)\ran+\lan R_k(\a(t))u_k(t),u_k(t)\ran\nonumber\\
&\qq\qq\qq+\lan q_k(t), X_k(t)\ran+\lan r_k(t), u_k(t)\ran\]dt.
\end{align}
Here
$
\Gamma_{\sc 1}(\imath)=\Gamma(\imath),\q \Gamma_{\sc 2}(\imath)=\Gamma(\imath)+\bar \Gamma(\imath),\text{ for }\Gamma=A,B,C,D,Q,R,S,q,r.
$

Using such a decomposition, we also rewrite the set of admissible initial states and the set of admissible controls by 
\begin{align*}
&\cD=\Big\{(s,\imath,x_{\sc 1}\oplus x_{\sc 2})\bigm|s\in[0,\i),\imath\in\cM,x_{\sc 1}\in L^2_{\cF^\a_s}(\O;\dbR^n)^\perp, x_{\sc 2}\in L^2_{\cF^\a_s}(\O;\dbR^n)\Big\}.\\
&\cU[s,T]= L_{\dbF^\a}^2(s,T;\dbR^m)^\perp\oplus L_{\dbF^\a}^2(s,T;\dbR^m).
\end{align*}

After the orthogonal decomposition, Problem (MF-LQ)$_{\scT}$ can be equivalently stated as follows.\ms

\noindent {\bf Problem (MF-LQ)$^*_{\scT}$}.  For any $(s,\imath,x_{\sc 1}\oplus x_{\sc 2})\in\cD$, find a $\bar u_{\sc 1}(\cd)\oplus\bar u_{\sc 2}(\cd)\in\cU[s,T]$ such that
$$J_{\scT}(s,x_{\sc 1}\oplus x_{\sc 2},\imath; \bar u_{\sc 1}(\cdot)\oplus \bar u_{\sc 2}(\cdot))=\inf_{u_{\sc 1}(\cdot)\oplus u_{\sc 2}(\cdot)\in \cU[s,T]}J_{\scT}(s,x_{\sc 1}\oplus x_{\sc 2},\imath; u_{\sc 1}(\cdot)\oplus u_{\sc 2}(\cdot)).$$

Our main effort in the sequel is devoted to studying the strong Turnpike property for   the optimal couple for Problem (MF-LQ)$^*_{\scT}$ as $T\rightarrow\i$. To identify the optimality of the limit process, it is natural to arise two infinite-horizon optimal control problems where  a stabilizibility condition is necessary.

\subsection{Stabilizability and Infinite-Horizon Optimal Control Problems}

In this subsection, we propose the following optimal control problems over the infinite horizon $[0,\i)$ to identify the optimality of the limit pair. The following are the two problems.\ms 

\noindent {\bf Problem (MF-LQ)$^*_{\sc\i}$}.  For any $(s,\imath,x_{\sc 1}\oplus x_{\sc 2})\in\cD$, find a $\bar u_{\sc 1}(\cd)\oplus\bar u_{\sc 2}(\cd)\in\cU^{s,x,\imath}_{ad}[s,\i)$ such that
$$J_{\sc\i}(s,x_{\sc 1}\oplus x_{\sc 2},\imath; \bar u_{\sc 1}(\cdot)\oplus \bar u_{\sc 2}(\cdot))=\inf_{u_{\sc 1}(\cdot)\oplus u_{\sc 2}(\cdot)\in\cU^{s,x,\imath}_{ad}[s,\i)}J_{\sc\i}(s,x_{\sc 1}\oplus x_{\sc 2},\imath; u_{\sc 1}(\cdot)\oplus u_{\sc 2}(\cdot)).$$

\noindent {\bf Problem (MF-LQ)$^*_{\sc E}$}.  For any $(s,\imath,x_{\sc 1}\oplus x_{\sc 2})\in\cD$, find a $\bar u_{\sc 1}(\cd)\oplus\bar u_{\sc 2}(\cd)\in\cU^{s,x,\imath}_{ad}[s,\i)$ such that
$$J_{\sc E}(s,x_{\sc 1}\oplus x_{\sc 2},\imath; \bar u_{\sc 1}(\cdot)\oplus \bar u_{\sc 2}(\cdot))=\inf_{u_{\sc 1}(\cdot)\oplus u_{\sc 2}(\cdot)\in\cU_{loc}[s,\i)}J_{\sc E}(s,x_{\sc 1}\oplus x_{\sc 2},\imath; u_{\sc 1}(\cdot)\oplus u_{\sc 2}(\cdot)).$$
Here
$$J_{\sc E}(s,x_{\sc 1}\oplus x_{\sc 2},\imath; u_{\sc 1}(\cdot)\oplus u_{\sc 2}(\cdot))=\liminf_{T\rightarrow\infty}\frac 1 TJ_{\sc T}(s,x_{\sc 1}\oplus x_{\sc 2},\imath; u_{\sc 1}(\cdot)\oplus u_{\sc 2}(\cdot)).$$

In the above two problems, we define
\begin{align*}&\cU_{ad}^{s,x,\imath}[0,\i)=\Big\{ u_{\sc 1}(\cdot)\oplus u_{\sc 2}(\cdot)\in L_{\dbF}^2(s,\i;\dbR^m)\Big|  X_{\sc1}(\cd;x,\imath,u(\cd))\oplus X_{\sc2}(\cd;x,\imath,u(\cd))\in L_{\dbF}^2(s,\i;\dbR^n)
\Big\},\\
&\cU_{loc}[s,\i)=\bigcap_{T>s} \cU[s,T]\end{align*}
where $ X(\cd;x,i,u(\cd))$ is the solution of \eqref{state} with initial $(x,\imath)=(x_1\oplus x_2,\imath)$ and control $u(\cd)=u_1(\cd)\oplus u_2(\cd).$ The admissible control $\cU_{ad}^{s,x,\imath}[0,\i)$ in  Problem (MF-LQ)$^*_{\sc\i}$ is a subset of $\cU[0,\i)$ which is used to guaranttee $J_{\sc\i}(s,x_{\sc 1}\oplus x_{\sc 2},\imath; u_{\sc 1}(\cdot)\oplus u_{\sc 2}(\cdot))$ to be finite (so that  Problem (MF-LQ)$^*_{\sc\i}$ is well-defined).

Problem (MF-LQ)$^*_{\sc \i}$ is usually referred to as the {\it infinite-horizon control problem} and Problem (MF-LQ)$^*_{\sc E}$ is usually referred to as the {\it ergodic control problem}. We will see that if we impose different assumptions on the non-homogeneous terms, then the limit process $(X_{\sc\i}(\cd),u_{\sc\i}(\cd))$ turns out to be the optimal couple for either Problem (MF-LQ)$^*_{\sc \i}$  or Problem (MF-LQ)$^*_{\sc E}$.

We notice that in  Problem (MF-LQ)$^*_{\sc\i}$, the set of admissible controls, $\cU^{s,x,\imath}_{ad}[s,\i)$, is dependent on the initial value $(s,x,\imath)$. Moreover, we also see that  $\cU^{s,x,\imath}_{ad}[s,\i)$ may not be a linear space necessarily. The following is a counter example.
\begin{example}
Consider the following 1-dimensional ordinary differential equation
$$dX(t)=(X(t)+u(t))dt,\q X(0)=x_0.$$
Let $u(t)=-2x_0e^{-t}$. It can be easily seen that $X(t)=x_0e^{-t}$ with $\int_0^\infty|X(t)|^2dt<\infty.$ Therefore  $u(\cd)\in \cU_{ad}^{x_0}[0,\i).$

Let $v(t)=-2\lambda x_0e^{-t}=\lambda u(t)$. Under such a control, the state process satisfies
$$X(t)= e^{t}x_0(1-\lambda)+x_0e^{-t}$$
For any $\lambda\neq 1,$  $v(\cd)=\lambda u(\cd)\notin \cU_{ad}^{x_0}[0,\i) $. Such an example justifies that $\cU^{s,x,\imath}_{ad}[s,\i)$ may not be a linear space necessarily.
\end{example}

Because $\cU^{s,x,\imath}_{ad}[s,\i)$ may not be a linear space, $(u_{\sc 1}(\cd)+\e v_1(\cd))\oplus (u_{\sc 2}(\cd)+\e v_2(\cd))$ may not belong to $\cU^{s,x,\imath}_{ad}[s,\i)$ given $u_{\sc 1}(\cd)\oplus u_{\sc 2}(\cd),v_1(\cd)\oplus v_2(\cd)\in\cU^{s,x,\imath}_{ad}[s,\i)$. Therefore the classical calculation of variation method 
is not directly applicable for Problem (MF-LQ)$^*_{\sc\i}$.
To overcome this difficulty,   we  need to  derive some new equivalent forms for Problem (MF-LQ)$^*_{\scT}$,  Problem (MF-LQ)$^*_{\sc\i}$ and  Problem (MF-LQ)$^*_{\sc E}$ to remove such a dependence. To achieve this, we consider the following stabilizability condition which for \eqref{state}. 
 
	\begin{definition}\label{Def-stab-1} (1).  $(\Th_{\sc 1}(\cd),\Th_{\sc 2}(\cd)):\cM\mapsto\dbR^{m\times n}\times \dbR^{m\times n}$ is said to be a {\it stabilizer} for  the following system (with $\a(t)$ suppressed)
   \begin{align}\label{SDE10Th}\begin{cases}
		& dX_{\sc 1}(t)=(A_{\sc 1}+B_{\sc 1}\Th_{\sc 1}) X_{\sc 1}(t)dt +[(C_{\sc 1}+D_{\sc 1}{\Th_{\sc 1}})X_{\sc 1}(t) +(C_{\sc 2}+D_{\sc 2}\Th_{\sc 2}) X_{\sc 2}(t) ]dW(t),\\
	&  dX_{\sc 2}(t)= (A_{\sc 2}+B_{\sc 2}\Th_{\sc 2}) X_{\sc 2}(t)dt,\q t\in[s,\i),\\
	&  X_{\sc 1}(s)=x_{\sc 1},\q X_{\sc 2}(s)=x_{\sc 2}, \q \a(s)=\iota 	
    \end{cases}\end{align}
		admits a unique solution  $(X_{\sc 1}(\cd),X_{\sc 2}(\cd))\in L^2_{\dbF^\a}(s,\i;\dbR^n)^\perp\times L^2_{\dbF^\a}(s,\i;\dbR^n)$ for any $(s,x_1\oplus x_2,\imath)\in\cD$. 
		
		(2)   $(\Th_{\sc 1}(\cd),\Th_{\sc 2}(\cd)):\cM\mapsto\dbR^{m\times n}\times \dbR^{m\times n}$ is said to be a {\it  dissipative strategy} of system \eqref{SDE10Th}  if there exist $\Si_{\sc 1},\Si_{\sc 2}:\cM\mapsto\dbS^n_{++}$ such that, for any $\jmath\in\cM$,
\begin{equation}\label{dissiptcre}
	\L[\Si_{\sc k}]+(A_{\sc k}+B_{\sc k}\Th_{\sc k})^\top \Si_{\sc k}+\Si_{\sc k}(A_{\sc k}+B_{\sc k}\Th_{\sc k}) +(C_{\sc k}+D_{\sc k}\Th_{\sc k})^\top\Si_{\sc 1}(C_{\sc k}+D_{\sc k}\Th_{\sc k})<0,
\end{equation}
for $k=1,2$.		
		
	\end{definition}	
		
It has been proved in   \cite{Mei-Wei-Yong-2025} that those two definitions are equivalent. Therefore, we write the set of all possible stabilizers by 
$\BS[A_{\sc 1},A_{\sc 2}, C_{\sc 1},C_{\sc 2}; B_{\sc 1},B_{\sc 2},D_{\sc 1},D_{\sc 2}].$
We now introduce the following assumption.\ms

{\noindent \bf (A2)}. $\BS[A_{\sc 1},A_{\sc 2}, C_{\sc 1},C_{\sc 2}; B_{\sc 1},B_{\sc 2},D_{\sc 1},D_{\sc 2}]\neq\emptyset$, or equivalently there exists a $(\widehat \Th_{\sc 1}(\cd),\widehat \Th_{\sc 2}(\cd))\in \BS[A_{\sc 1},A_{\sc 2}, C_{\sc 1},C_{\sc 2}; B_{\sc 1},B_{\sc 2},D_{\sc 1},D_{\sc 2}]$.
\ms

In fact,  it can be easily seen that  $(\Th_1(\cd),\Th_2(\cd))\in\BS[A_{\sc 1},A_{\sc 2}, C_{\sc 1},C_{\sc 2}; B_{\sc 1},B_{\sc 2},D_{\sc 1},D_{\sc 2}]$ if and only if 
\begin{align*}\begin{cases}
		& dX_{\sc 1}(t)=(A_{\sc 1}+B_{\sc 1}\Th_{\sc 1}) X_{\sc 1}(t)dt +[(C_{\sc 1}+D_{\sc 1}{\Th_{\sc 1}})X_{\sc 1}(t) ]dW(t),\\
	&  dX_{\sc 2}(t)= (A_{\sc 2}+B_{\sc 2}\Th_{\sc 2}) X_{\sc 2}(t)dt,\q t\in[s,\i),\\
	&  X_{\sc 1}(s)=x_{\sc 1},\q X_{\sc 2}(s)=x_{\sc 2}, \q \a(s)=\iota 	
    \end{cases}\end{align*}
    admits a unique solution  $(X_{\sc 1}(\cd),X_{\sc 2}(\cd))\in L^2_{\dbF^\a}(s,\i;\dbR^n)^\perp\times L^2_{\dbF^\a}(s,\i;\dbR^n)$ for any $(s,x_1\oplus x_2,i)\in\cD$.

Now let us adopt (A2) to remove the dependence of the admissible control set $\cU^{s,x,\imath}_{ad}[s,\i)$ on the initial value. For any  $u(\cd)=u_{\sc 1}(\cd)\oplus u_{\sc 2}(\cd)\in \cU^{s,x,\imath}_{ad}[s,\i)$, write the solution by 
$$X(\cd\,;s,x,\imath;u(\cd))=X_{\sc 1}(\cd\,;s,x,\imath;u(\cd))\oplus X_{\sc 2}(\cd\,;s,x,\imath;u(\cd)).$$
 Let 
 $$v_{\sc k}(t)=u_{\sc k}(t)-\widehat\Th_{\sc k}(\a(t))X_{\sc k}(t\,;s,x,\imath;u(\cd)),\text{ for all }t\geq s. $$
Then $v(\cd)=v_{\sc 1}(\cd)\oplus v_{\sc 2}(\cd)\in \cU[s,\infty)$ and 
$X(\cd\,;s,x,\imath;u(\cd))=\widehat X(\cd\,;s,x,\imath;v(\cd)$.

For any $v(\cd)=v_{\sc 1}(\cd)\oplus v_{\sc 2}(\cd)\in \cU[s,\infty)$, define 
$$u_{\sc k}(t)=\widehat\Th_{\sc k}(\a(t))\widehat X_{\sc k}(t\,;s,x,\imath;u(\cd))+v_{\sc k}(t) ,\text{ for all }t\geq s. $$
Then $u(\cd)= u_{\sc 1}(\cd)\oplus u_{\sc 2}(\cd)\in \cU_{ad}^{s,x,\imath}[s,\infty)$ and $X_k(\cd\,;s,x,\imath;u(\cd))=\widehat X_k(\cd\,;s,x,\imath;v(\cd))$.  Here $\widehat X_k(\cd\,;s,x,\imath;v(\cd))$ is the solution to 
\begin{align}\label{SDEhat}\begin{cases}
& d\widehat X_{\sc 1}(t)=\Big[(A_{\sc 1}+B_{\sc 1}\widehat\Th_{\sc 1}) \widehat X_{\sc 1}+B_{\sc 1}v_{\sc 1}+b_{\sc 1}\Big]dt\\
&	 \q+[(C_{\sc 1}+D_{\sc 1}\widehat\Th_{\sc 1}) \widehat X_{\sc 1}+(C_{\sc 2}+D_{\sc 2}\widehat\Th_{\sc 2})\widehat X_{\sc 2}+D_{\sc 1}v_{\sc 1}+D_{\sc 2}v_{\sc 2}+\si]dW,\\
& d\widehat X_{\sc 2}(t)=[(A_{\sc 2}+B_{\sc 2}\widehat\Th_{\sc 2})\widehat X_{\sc 2}+B_{\sc 2}v_{\sc 2}]dt, \q t\in[s,T],\\
& \widehat X_{\sc 1}(s)=x_{\sc 1},\q \widehat X_{\sc 2}(s)=x_{\sc 2}, \q \a(s)=\imath.\end{cases}\end{align}
We also define a new cost functional
\begin{align}\label{cost1hat}
&\widehat J_{\scT}(s, x_{\sc 1}\oplus x_{\sc 2},\imath;v_{\sc 1}(\cd)\oplus v_{\sc 2}(\cd))\nonumber\\
&\q=\sum_{k=1}^2\dbE  \int_s^T\[\lan  Q_{\sc k}X_{\sc k},X_{\sc k}\ran+2\lan S_{\sc k}X_{\sc k},\widehat \Th_{\sc k} X_{\sc k}+v_{\sc k}\ran+\lan R_{\sc k}(\widehat \Th_{\sc k} X_{\sc k}+v_{\sc k}),\widehat \Th_{\sc k} X_{\sc k}+v_{\sc k}\ran\nonumber\\
&\qq\qq\qq+\lan q_{\sc k}, X_{\sc k}\ran+\lan r_{\sc k}, \widehat\Th_{\sc k} X_{\sc k}+v_{\sc k}\ran\]dt.
\end{align}

Observed from above, Problem (MF-LQ)$_{\scT}$, Problem (MF-LQ)$_{\sc\i}$  and Problem (MF-LQ)$_{\sc E}$  can be further equivalently stated as follows.\ms

\noindent {\bf Problem (MF-LQ)$^{**}_{\sc T}$}.  For any $(s,\imath,x_{\sc 1}\oplus x_{\sc 2})\in\cD$, find a $\bar v_{\sc 1}(\cd)\oplus\bar v_{\sc 2}(\cd)\in\cU[s,T]$ such that
\begin{align*}\widehat J_{\scT}(s,x_{\sc 1}\oplus x_{\sc 2},\imath; \bar u_{\sc 1}(\cdot)\oplus \bar u_{\sc 2}(\cdot))=\inf_{u_{\sc 1}(\cdot)\oplus u_{\sc 2}(\cdot)\in \cU[s,T]}\widehat J_{\scT}(s,x_{\sc 1}\oplus x_{\sc 2},\imath; u_{\sc 1}(\cdot)\oplus u_{\sc 2}(\cdot)).\end{align*}

\noindent {\bf Problem (MF-LQ)$^{**}_{\sc\i}$}.  For any $(s,\imath,x_{\sc 1}\oplus x_{\sc 2})\in\cD$, find a $\bar v_{\sc 1}(\cd)\oplus\bar v_{\sc 2}(\cd)\in\cU[s,\i)$ such that
$$\widehat J_{\sc\i}(s,x_{\sc 1}\oplus x_{\sc 2},\imath; \bar u_{\sc 1}(\cdot)\oplus \bar u_{\sc 2}(\cdot))=\inf_{u_{\sc 1}(\cdot)\oplus u_{\sc 2}(\cdot)\in\cU[s,\i)}\widehat J_{\sc\i}(s,x_{\sc 1}\oplus x_{\sc 2},\imath; u_{\sc 1}(\cdot)\oplus u_{\sc 2}(\cdot)).$$

\noindent {\bf Problem (MF-LQ)$^{**}_{\sc E}$}.  For any $(s,\imath,x_{\sc 1}\oplus x_{\sc 2})\in\cD$, find a $\bar v_{\sc 1}(\cd)\oplus\bar v_{\sc 2}(\cd)\in\cU_{loc}[s,\i)$ such that
$$\widehat J_{\sc E}(s,x_{\sc 1}\oplus x_{\sc 2},\imath; \bar u_{\sc 1}(\cdot)\oplus \bar u_{\sc 2}(\cdot))=\inf_{u_{\sc 1}(\cdot)\oplus u_{\sc 2}(\cdot)\in\cU_{loc}[s,\i)}\widehat J_{\sc E}(s,x_{\sc 1}\oplus x_{\sc 2},\imath; u_{\sc 1}(\cdot)\oplus u_{\sc 2}(\cdot)).$$

At the same time (A2) reduces to 
$$(0,0)\in \BS[A_{\sc 1}+B_{\sc1}{\widehat\Th_{\sc1}},A_{\sc 2}+B_{\sc2}{\widehat\Th_{\sc2}}, C_{\sc 1},C_{\sc 2}; B_{\sc 1},B_{\sc 2},D_{\sc 1},D_{\sc 2}].$$
Without loss of generality, we assume that $\widehat\Th_{\sc 1}(\cd)=\widehat\Th_{\sc 2}(\cd)=0 $. Then (A2) can be represented as 
\ms

{\noindent \bf (A2)'} $(0,0)\in \BS[A_{\sc 1},A_{\sc 2}, C_{\sc 1},C_{\sc 2}; B_{\sc 1},B_{\sc 2},D_{\sc 1},D_{\sc 2}]$.
\ms

  In the sequel, we will consider  Problem (MF-LQ)$_{\scT}^*$,  Problem (MF-LQ)$_{\sc\i}^*$ and Problem (MF-LQ)$_{\sc E}^*$ under (A2)'.  Otherwise, we will work with Problem (MF-LQ)$_{\scT}^{**}$,  Problem (MF-LQ)$_{\sc\i}^{**}$ and Problem (MF-LQ)$_{\sc E}^{**}$. 

\subsection{Optimal Control for Problem (MF-LQ)$_{\scT}^*$}

Now we are ready to study the optimal control for Problem (MF-LQ)$_{\scT}^*$. We need the following positive-definiteness condition in the sequel.
\ms

{\noindent\bf (A3)}. For each $\imath\in\cM$ and $k=1,2$,
\begin{equation*}\label{Q}Q_{\sc k}(\imath)-S_{\sc k}(\imath)^\top R_{\sc k}(\imath)^{-1}S_{\sc k}(\imath)\in\dbS^n_{++}.\end{equation*}

Now we can state the results on  the optimal control of Problem (MF-LQ)$_{\scT}$.

\begin{theorem}\label{theorem1} Suppose  (A1), (A2)' and (A3) hold. Then  the following are true.

{\rm (i)} There exists a unique solution $P_{\sc 1,T}(\cd),P_{\sc 2,T}(\cd):[0,T]\times\cM\to\dbS_{++}^n$ to the following ARE:
\begin{align}\label{ARE00}
\begin{cases}
&\dot P_{\sc k,T}+\L[P_{\sc k,T}]+P_{\sc k,T}A_{\sc k}+A_{\sc k}^\top P_{\sc k,T} +C_{\sc k} ^\top P_{\sc k,T} C_{k,} +Q_{\sc k} \\
&\q-[P_{\sc k,T} B_{\sc k} +C_{\sc k} ^\top P_{\sc 1,T} D_{\sc k} +S_{\sc k} ^\top][R_{\sc k} +D_{\sc k} ^\top P_{\sc 1,T} D_{\sc k} ]^{-1}[B_{\sc k} ^\top P_{\sc k,T} +D_{\sc k} ^\top P_{\sc 1,T} C_{\sc k} +S_{\sc k} ]=0,\\
&P_{\sc k,T}(T)=0,\q R_{\sc k} +D_{\sc k} ^\top P_{\sc 1,T} D_{\sc k} >0,\qq\qq k=1,2
\end{cases}\end{align}
Write
$$\Th_{\sc k,T}(t,\imath)=-(R_{\sc k}+D_{\sc k}^\top P_{\sc 1,T}(t,\imath)D_{\sc k})^{-1}(B_{\sc k}^\top P_{\sc k,T}(t,\imath)+D_{\sc k}^\top P_{\sc 1,T}C_{\sc k}+S_{\sc k}).$$

{\rm (ii)} There exists a unique adapted solution $(\eta_{\sc1,T}(\cd),\zeta_{\scT}(\cd),\z^M_{\sc1,T}(\cd))\in L_{\dbF^\a}^2(0,\i;\dbR^n)^\perp\times L_{\dbF}^2(0,\i;\dbR^n)\times M_{\dbF^\a_-}^2(0,\i$; $\dbR^n)^\perp$ and
$(\eta_{\sc2,T}(\cd), \z^M_{\sc2,T}(\cd))\in L_{\dbF^\a}^2(0,\i;\dbR^n)\times M_{\dbF^\a_-}^2(0,\i;\dbR^n)$ to the following BSDE
\begin{align}\label{BSDE-Y10}\begin{cases}&d\eta_{\sc1,T}=\z _{\scT}dW+\z_{\sc1,T}^MdM -\Big((A_{\sc 1}^{\Th_{\sc1,T}})^\top \eta_{\sc 1,T}+(C_{\sc 1}^{\Th_{\sc1,T}})^\top\Pi_{\sc 1}[\zeta_{\scT}]+\varphi_{\sc 1,T}(t,\a(t))\Big)dt ,\\
& d\eta_{\sc2,T}=\z_{\sc2,T}^MdM-\Big( (A_{\sc 2}^{\Th_{\sc2,T}})^\top \eta_{\sc 2}
+(C_{\sc 2}^{\Th_{\sc2,T}})^\top\Pi_{\sc 2}[\z_{\scT}]+\varphi_{\sc 2,T}(t,\a(t))\Big)dt,\\
&\eta_{\sc1,T}(T)=\eta_{\sc2,T}(T)=0.\end{cases}\end{align} 
where
$\varphi_{\sc k,T}(t,\imath)=P_{\sc k,T}(t,\imath)b_{\sc k}(t)+(C_{\sc k}^{\Th_{\sc k,T}}(t,\imath))^\top P_{\sc 1,T}(t,\imath)\si_{\sc k}(t)+q_{\sc k}(t)+\Th_{\sc k,T}^\top(t,\imath) r_{\sc k}(t)$.
 Write
\begin{equation}\label{optimal strategy}v_{\sc k,T}(t,\imath)=-(R_{\sc k}+D_{\sc k}^\top P_{\sc 1,T}D_{\sc k})^{-1}(B_{\sc k}^\top \eta_{\sc k}+D_{\sc k}^\top\Pi_{\sc k}[\z_{\scT}]+D_{\sc k}^\top P_{\sc 1,T}\si_{\sc k}+r_{\sc k}),\q k=1,2.\end{equation}

{\rm (iii)} The optimal control of Problem {\rm(MF-LQ)$^*_{\scT}$} admits the following closed-loop representation, 
\begin{equation}\label{optimal control}\bar u_{\sc k,T}(t)=\Th_{\sc k,T}(t,\a(t))\bar X_{\sc k,T}(t)+v_{\sc k,T}(t,\a(t)),\qq k=1,2.\end{equation} 
Here $(\bar X_{\sc 1,T}(\cd),)\bar X_{\sc 2,T}(\cd))$  is the solution to 
\begin{align}\label{SDEopt}\begin{cases}
& d\bar X_{\sc 1,T}(t)=\Big[(A_{\sc 1}+B_{\sc 1}\Th_{\sc 1,T }) \bar X_{\sc 1,T}+B_{\sc 1}v_{\sc 1,T}+b_{\sc 1}\Big]dt\\
&	 \q+[(C_{\sc 1}+D_{\sc 1}\Th_{\sc 1,T}) \bar X_{\sc 1,T}+(C_{\sc 2}+D_{\sc 2}\Th_{\sc 2,T})\bar X_{\sc 2}+D_{\sc 1}v_{\sc 1,T}+D_{\sc 2}v_{\sc 2,T}+\si]dW,\\
& d\bar X_{\sc 2,T}(t)=[(A_{\sc 2}+B_{\sc 2}\Th_{\sc 2,T})\bar X_{\sc 2,T}+B_{\sc 2}v_{\sc 2,T}]dt, \q t\in[s,T],\\
& \bar X_{\sc 1,T}(s)=x_{\sc 1},\q \bar X_{\sc 2,T}(s)=x_{\sc 2}, \q \a(s)=\imath.\end{cases}\end{align}

\end{theorem}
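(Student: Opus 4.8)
The plan is to treat Problem (MF-LQ)$^*_{\scT}$ as a genuinely convex LQ problem on the product Hilbert space $\cU[s,T]=L^2_{\dbF^\a}(s,T;\dbR^m)^\perp\oplus L^2_{\dbF^\a}(s,T;\dbR^m)$, exploiting that after the orthogonal decomposition the cost \rf{cost1} splits additively over $k=1,2$. The three assertions are the three classical ingredients: (i) solvability of the Riccati system, (ii) solvability of the linear BSDE carrying the non-homogeneous data, and (iii) a verification argument by completing the square. The organizing remark is that (A3) is exactly the nondegeneracy condition making each summand of \rf{cost1} uniformly convex in $u_{\sc k}$; hence the infimum is attained at a unique control, and the task reduces to identifying it explicitly.

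For (i) I would solve the equation for $P_{\sc 1,T}$ first, since it is self-contained: it is a matrix Riccati ODE indexed by the finite set $\cM$, coupled across states only through the linear generator term $\L[\cd]$. Local existence is immediate from the locally Lipschitz right-hand side; to continue the solution to all of $[0,T]$ and keep it in $\dbS^n_{++}$, I would identify $\lan P_{\sc 1,T}(t,\imath)x_{\sc 1},x_{\sc 1}\ran$ with the value of the homogeneous $X_{\sc 1}$-subproblem and read off an a priori two-sided bound from the uniform convexity of (A3); the upper bound prevents finite-time blow-up and the lower bound gives positivity, and the same estimate yields $R_{\sc 1}+D_{\sc 1}^\top P_{\sc 1,T}D_{\sc 1}>0$. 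With $P_{\sc 1,T}$ fixed, the equation for $P_{\sc 2,T}$ is a Riccati ODE in $P_{\sc 2,T}$ whose noise and control-weight coefficients are supplied by the already-constructed $P_{\sc 1,T}$ (the noise acts on the system only through the $X_{\sc 1}$-diffusion), in particular through the weight $R_{\sc 2}+D_{\sc 2}^\top P_{\sc 1,T}D_{\sc 2}$, whose positivity again follows from the convexity of the $X_{\sc 2}$-subproblem; this delivers its global $\dbS^n_{++}$-valued solution. This triangular resolution of the coupling is, I expect, the main obstacle, as one must verify that the a priori estimates survive both the regime-switching coupling $\L[\cd]$ and the cross-dependence on $P_{\sc 1,T}$.

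Part (ii) is then routine. Once $P_{\sc k,T}$, and hence $\Th_{\sc k,T}$ and the data $\varphi_{\sc k,T}$, are frozen, \rf{BSDE-Y10} is a \emph{linear} BSDE driven by $W$ and the martingale measure $M$, with bounded coefficients $A_{\sc k}^{\Th_{\sc k,T}}$, $C_{\sc k}^{\Th_{\sc k,T}}$ and square-integrable inhomogeneity (by (A1) and the boundedness of $P_{\sc k,T}$). Its only structural feature is that the two equations are linked through the projections $\Pi_{\sc k}[\z_{\scT}]$; since $\Pi$ is a bounded orthogonal projection the coupled system remains linear, so existence and uniqueness in the stated spaces follow from the standard theory of regime-switching BSDEs (equivalently, a contraction on a short interval $[T-\d,T]$ patched across $[0,T]$).

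For (iii) I would apply It\^o's formula to $t\mapsto\lan P_{\sc k,T}(t,\a(t))X_{\sc k}(t),X_{\sc k}(t)\ran+2\lan\eta_{\sc k,T}(t),X_{\sc k}(t)\ran$ along an arbitrary admissible $u_{\sc k}(\cd)$, using that the $W$- and $M$-martingale parts have zero expectation and that, because $X_{\sc 1}$ and $X_{\sc 2}$ lie in orthogonal subspaces of $L^2_{\dbF}$, the cross terms between the two subsystems drop after taking expectations. Substituting \rf{ARE00} and \rf{BSDE-Y10} and collecting terms completes the square,
\begin{align*}
&J_{\scT}(s,x_{\sc 1}\oplus x_{\sc 2},\imath;u(\cd))=J^0_{\scT}(s,x,\imath)\\
&\q+\sum_{k=1}^2\dbE\int_s^T\big\lan(R_{\sc k}+D_{\sc k}^\top P_{\sc 1,T}D_{\sc k})(u_{\sc k}-\Th_{\sc k,T}X_{\sc k}-v_{\sc k,T}),\,u_{\sc k}-\Th_{\sc k,T}X_{\sc k}-v_{\sc k,T}\big\ran dt,
\end{align*}
where $J^0_{\scT}$ gathers the terms independent of $u(\cd)$. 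Since each weight is positive definite, the right-hand side is minimized exactly when $u_{\sc k}=\Th_{\sc k,T}(\cd,\a)X_{\sc k}+v_{\sc k,T}(\cd,\a)$, which is the closed-loop form \rf{optimal control} with state equation \rf{SDEopt}; strict positivity also yields uniqueness of the optimizer.
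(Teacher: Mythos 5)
The paper itself contains no proof of Theorem~\ref{theorem1}: Section~\ref{sec:opt} opens by saying these results are ``recalled'' from the reference \cite{Mei-Wei-Yong-2025}, and the remark immediately following the theorem again defers there for details. So there is no in-paper argument to compare against; what can be judged is whether your outline is a sound reconstruction, and in its essentials it is. You correctly isolate the three structural facts that make the statement true: (a) the Riccati system \rf{ARE00} is triangular --- the $P_{1,T}$ equation is self-contained, and $P_{2,T}$ sees $P_{1,T}$ only through the weights $C_2^\top P_{1,T}C_2$, $C_2^\top P_{1,T}D_2$, $R_2+D_2^\top P_{1,T}D_2$, precisely because the noise enters the decomposed system \rf{SDE1ho} only through the $X_1$-diffusion; (b) the coupling of the two BSDEs in \rf{BSDE-Y10} through $\Pi_k[\zeta_T]$ is linear and contractive, so a fixed-point argument closes it (note that the paper, for the infinite-horizon analogue in Proposition~\ref{propBSDE}, uses a slightly different device --- solve auxiliary BSDEs \emph{without} the mean-field terms and then apply the projections $\Pi_k$ to their solutions --- which you could equally invoke here); (c) in the completion of squares the cross terms between the two subsystems vanish in expectation because $\dbE_t^\a[X_1(t)]=0$ and $\dbE_t^\a[u_1(t)]=0$ while $X_2(t)$, $u_2(t)$ and all coefficient matrices evaluated at $\a(t)$ are $\cF_t^\a$-measurable; this is exactly why two decoupled Riccati equations suffice.

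Two points would need care in a full write-up. First, identifying $\lan P_{1,T}(t,\imath)x_1,x_1\ran$ with a value function \emph{before} the verification in (iii) is available is circular as stated; the standard repair is to run the completion-of-squares identity on the maximal interval of existence of the local solution of \rf{ARE00}, which yields the two-sided a priori bounds there (upper bound from the zero control, lower bound from (A3)) and hence precludes blow-up and gives positivity. Second, in (iii) you should also check that the closed-loop control \rf{optimal control} is admissible for Problem (MF-LQ)$^*_{T}$, i.e.\ that $\bar u_{1,T}$ actually lies in $L^2_{\dbF^\a}(s,T;\dbR^m)^\perp$: this holds because $\Th_{1,T}(t,\a(t))$ and $(R_1+D_1^\top P_{1,T}D_1)^{-1}$ are $\cF_t^\a$-measurable while $\bar X_{1,T}$, $\eta_{1,T}$, $\Pi_1[\zeta_T]$, $\si_1$, $r_1$ all have vanishing conditional expectation given $\cF_t^\a$, but it is not automatic and should be stated. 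With these repairs your outline is the natural proof, and almost certainly the one carried out in the cited reference.
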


Until now, we have presented the expilcit form of the optimal control for Problem (MF-LQ)$_{\scT}$. The rest of the paper is focused on the asymptotic behavior of the optimal control in \eqref{optimal control}. Before finishing this section, let us make the following remark.

\begin{remark} (1) Note Theorem \ref{theorem1} also holds if (A2)' is replaced by (A2). In particular, it is worth to emphasize that the optimal control in \eqref{optimal control} is independent of the choice of $(\widehat\Th_1(\cd),\widehat\Th_2(\cd))$ in  Problem (MF-LQ)$^{**}_{\sc T}$. For more details, one check \cite{Mei-Wei-Yong-2025}. \ms

(2) The assumption (A3) can be possibly weaken by some uniform convexity assumption on the cost functional. This paper will not consider this part.
\end{remark}

\section{Asymptotic Behavior of the Optimal Controls}\label{sec:abo}
With the closed-loop representation of the optimal control in \eqref{optimal control}, 
this section is devoted to studying the asymptotic behavior as $T\rightarrow\infty$. We will consider $\Th_{\sc k,T}(\cd)$ and $v_{\sc k,T}(\cd)$ separately.
\subsection{Riccati Equation}

To study the asymptotic behavior of $(P_{\sc 1,T}(\cd),P_{\sc 2,T}(\cd))$ as $T\rightarrow\i$, we consider the following ARE
\begin{align}\label{ARE00int}
\begin{cases}
&\L[P_{\sc k,\i}]+P_{\sc k,\i}A_{\sc k}+A_{\sc k}^\top P_{\sc k,\i} +C_{\sc k} ^\top P_{\sc k,\i} C_{k,} +Q_{\sc k} \\
&\q-[P_{\sc k,\i} B_{\sc k} +C_{\sc k} ^\top P_{\sc 1,\i} D_{\sc k} +S_{\sc k} ^\top][R_{\sc k} +D_{\sc k} ^\top P_{\sc 1,\i} D_{\sc k} ]^{-1}[B_{\sc k} ^\top P_{\sc k,\i} +D_{\sc k} ^\top P_{\sc 1,\i} C_{\sc k} +S_{\sc k} ]=0.\\
& R_{\sc k} +D_{\sc k} ^\top P_{\sc 1,T} D_{\sc k} >0,\text{ for }k=1,2.\end{cases}\end{align}
Define
$$\Th_{\sc k,\i}(\imath)=-(R_{\sc k}+D_{\sc k}^\top P_{\sc 1,\i}(t,\imath)D_{\sc k})^{-1}(B_{\sc k}^\top P_{\sc k,\i}(t,\imath)+D_{\sc k}^\top P_{\sc 1,\i}C_{\sc k}+S_{\sc k}).$$

The following proposition presents the convergence of $(P_{\sc 1,T}(\cd),P_{\sc 2,T}(\cd))$.

\begin{proposition} Suppose (A1), (A2)' and (A3) hold.  The following are true.\ms 

{\rm (i)}. The ARE \eqref{ARE00int} admits a unique solution $(P_1(\cd),P_2(\cd)):\cM\mapsto\dbS^n_{++}$ such that 
$(\Th_{\sc 1,\i}(\cd),\Th_{\sc 2,\i}(\cd))\in\BS[A_{\sc 1},A_{\sc 2}, C_{\sc 1},C_{\sc 2}; B_{\sc 1},B_{\sc 2},D_{\sc 1},D_{\sc 2}].$

{\rm (ii)}. For any given $t\in[0,\infty)$,  the following convergence holds
\begin{equation}\label{P to P}P_{\sc k,T}(t,\imath)=P_{\sc k,T-t}(0,\imath)\nearrow P_{\sc k,\i}(\imath),\qq\hb{as } T\nearrow\i,\q\forall\imath\in\cM.\end{equation}

{\rm (iii)}. There exists a $\d_*>0$ and $K>0$ (independent of $T$) so that
\bel{P-P}
0\les P_{\sc k,\i}(\imath)-P_{\sc k, T}(t,\imath)\les Ke^{-\d_*(T-t)}I,\qq t\in[0,T].\ee
Consequently,
\bel{Q-Q}
|\Th_{\sc k,\i}(\imath)-\Th_{\sc k, T}(t,\imath)|\les Ke^{-\d_*(T-t)}I,\qq t\in[0,T].\ee

\end{proposition}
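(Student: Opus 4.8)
The plan is to exploit two structural features of \eqref{ARE00}. First, all coefficients $A_{\sc k},B_{\sc k},C_{\sc k},D_{\sc k},Q_{\sc k},R_{\sc k},S_{\sc k}$ are time-independent, so the Riccati system is autonomous and its solution depends only on the time-to-go $T-t$. Second, the $k=1$ equation is self-contained, whereas the $k=2$ equation involves $P_{\sc 1,T}$ only as a coefficient. I would therefore treat the $k=1$ equation first as a regime-switching Riccati equation of exactly the type handled in \cite{Mei-Wang-Yong-2025a,Mei-Wang-Yong-2025b}, and then regard the $k=2$ equation as a Riccati equation driven by the (already controlled) coefficient $P_{\sc 1,\cd}$. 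For (i), under (A2)$'$ and (A3) the existence and uniqueness of a stabilizing solution $P_{\sc 1,\i}$ to the $k=1$ ARE is standard; (A3) secures $R_{\sc 1}+D_{\sc 1}^\top P_{\sc 1,\i}D_{\sc 1}>0$ and $\Th_{\sc 1,\i}(\cd)\in\BS[\ldots]$. Substituting $P_{\sc 1,\i}$ into the $k=2$ equation produces a standard ARE whose unique stabilizing solution $P_{\sc 2,\i}$ then yields $(\Th_{\sc 1,\i}(\cd),\Th_{\sc 2,\i}(\cd))\in\BS[\ldots]$. In practice I would obtain (i) together with (ii), constructing $P_{\sc k,\i}$ as the limit below and verifying a posteriori that it solves \eqref{ARE00int}.

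For (ii), autonomy yields the shift identity $P_{\sc k,T}(t,\imath)=P_{\sc k,T-t}(0,\imath)$ at once, since the backward flow of \eqref{ARE00} depends only on the time-to-go $T-t$. Monotonicity of $\tau\mapsto P_{\sc k,\tau}(0,\cd)$ follows from the value-function reading of $\lan P_{\sc k,\tau}(0,\cd)x,x\ran$ as the optimal cost of the homogeneous problem on $[0,\tau]$: restricting an optimal control for horizon $\tau'>\tau$ to $[0,\tau]$ is admissible and, since the homogeneous running cost is nonnegative under (A3), costs no more, which dominates the horizon-$\tau$ value. A uniform-in-$T$ upper bound comes from feeding the stabilizing feedback guaranteed by (A2)$'$ into the cost, giving a finite infinite-horizon value and hence $P_{\sc k,\tau}(0,\cd)\les KI$ for all $\tau$. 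Monotone plus bounded gives the convergence \eqref{P to P}; passing to the limit in \eqref{ARE00} identifies $P_{\sc k,\i}$ as the stabilizing solution of \eqref{ARE00int}.

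The crux is the rate \eqref{P-P}. I would subtract \eqref{ARE00int} from \eqref{ARE00} and study $\Delta_{\sc k}(t):=P_{\sc k,\i}-P_{\sc k,T}(t)\ges0$. For $k=1$, completing the square in the control term rewrites the difference as a linear Lyapunov-type equation
\begin{align*}
&\dot\Delta_{\sc 1}+\L[\Delta_{\sc 1}]+(A_{\sc 1}+B_{\sc 1}\Th_{\sc 1,\i})^\top\Delta_{\sc 1}+\Delta_{\sc 1}(A_{\sc 1}+B_{\sc 1}\Th_{\sc 1,\i})\\
&\qq+(C_{\sc 1}+D_{\sc 1}\Th_{\sc 1,\i})^\top\Delta_{\sc 1}(C_{\sc 1}+D_{\sc 1}\Th_{\sc 1,\i})+\cR_{\sc 1}(\Delta_{\sc 1})=0,
\end{align*}
with terminal value $\Delta_{\sc 1}(T)=P_{\sc 1,\i}$ and a quadratic remainder $\cR_{\sc 1}(\Delta_{\sc 1})$ of a definite sign. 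Since $\Th_{\sc 1,\i}$ is a stabilizer, it is a dissipative strategy (by the equivalence in \cite{Mei-Wei-Yong-2025}), so there is $\Si_{\sc 1}\in\dbS^n_{++}$ satisfying \eqref{dissiptcre}; testing the equation against $\Si_{\sc 1}$ and running Gronwall on $t\mapsto\tr(\Si_{\sc 1}\Delta_{\sc 1}(t))$ produces the exponential bound $0\les\Delta_{\sc 1}(t)\les Ke^{-\d_*(T-t)}I$ with $\d_*,K$ independent of $T$.

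For $k=2$ the same subtraction gives an analogous Lyapunov equation for $\Delta_{\sc 2}$, but now with an extra forcing built from $\Delta_{\sc 1}$ through the $D_{\sc 2}^\top P_{\sc 1}D_{\sc 2}$ and $D_{\sc 2}^\top P_{\sc 1}C_{\sc 2}$ couplings; this forcing is already $O(e^{-\d_*(T-t)})$ by the $k=1$ estimate, so the same dissipativity/Gronwall argument with $\Si_{\sc 2}$ closes the bound, after possibly shrinking $\d_*$. Finally, \eqref{Q-Q} is a corollary: on the compact region where all $P$'s live, the map sending $(P_{\sc 1},P_{\sc k})$ to $\Th_{\sc k}$ through its defining formula is locally Lipschitz (the inverse $(R_{\sc k}+D_{\sc k}^\top P_{\sc 1}D_{\sc k})^{-1}$ stays bounded), so the exponential convergence of the $P$'s transfers to the gains. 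I expect the main obstacle to be step (iii): putting the Riccati difference into a genuinely linear Lyapunov form with a controlled sign on the remainder, extracting a $T$-uniform rate from \eqref{dissiptcre}, and propagating the $k=1$ decay through the coupled $k=2$ equation without degrading uniformity in $T$.
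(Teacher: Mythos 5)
Your arguments for (i) and (ii) are fine and essentially standard (the paper itself simply cites \cite{Mei-Wei-Yong-2025} for those two parts). The genuine gap is in (iii), and it sits exactly where you flagged "the main obstacle" without resolving it. Completing the square in the Riccati difference around the \emph{limiting} gain $\Th_{\sc 1,\i}$ gives
\begin{align*}
&\dot\Delta_{\sc 1}+\L[\Delta_{\sc 1}]+(A_{\sc 1}+B_{\sc 1}\Th_{\sc 1,\i})^\top\Delta_{\sc 1}+\Delta_{\sc 1}(A_{\sc 1}+B_{\sc 1}\Th_{\sc 1,\i})\\
&\q+(C_{\sc 1}+D_{\sc 1}\Th_{\sc 1,\i})^\top\Delta_{\sc 1}(C_{\sc 1}+D_{\sc 1}\Th_{\sc 1,\i})+\cR_{\sc 1}=0,\qq \Delta_{\sc 1}(T)=P_{\sc 1,\i},
\end{align*}
with $\cR_{\sc 1}=(\Th_{\sc 1,T}-\Th_{\sc 1,\i})^\top(R_{\sc 1}+D_{\sc 1}^\top P_{\sc 1,T}D_{\sc 1})(\Th_{\sc 1,T}-\Th_{\sc 1,\i})\ges 0$. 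The sign is indeed definite, but it is the \emph{unhelpful} one: the backward flow of the linear part preserves positive semidefiniteness, so this source pushes $\Delta_{\sc 1}$ up and cannot be discarded. Since $\cR_{\sc 1}=O(|\Delta_{\sc 1}|^2)$, what your argument yields is an inequality of the form $f(t)\les Ke^{-\d_*(T-t)}+K\int_t^Te^{-\d_*(s-t)}f(s)^2\,ds$ for $f=|\Delta_{\sc 1}|$, and with only the a priori bound $f\les|P_{\sc 1,\i}|$ the effective Gronwall coefficient $K|P_{\sc 1,\i}|$ need not be smaller than $\d_*$: no $T$-uniform exponential rate follows. To close it you must either add a bootstrap (use (ii), the shift identity and finiteness of $\cM$ to get $f(s)\les\e$ whenever $T-s$ is large, then absorb the quadratic term), or complete the square around the time-varying gain $\Th_{\sc 1,T}$ instead, which flips the remainder to the sign where it can be dropped but then requires exponential decay, uniform in $T$, of the evolution generated by $\Th_{\sc 1,T}(\cd)$ --- again something that must be proved. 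A second, smaller flaw: "Gronwall on $\tr(\Si_{\sc 1}\Delta_{\sc 1})$" does not work as stated, because in \eqref{dissiptcre} the matrix $\Si_{\sc k}$ occupies the same backward/adjoint slot as $\Delta_{\sc k}$; pairing two backward objects by the trace produces terms like $\tr\big(\Delta_{\sc 1}(A\Si_{\sc 1}+\Si_{\sc 1}A^\top+C\Si_{\sc 1}C^\top)\big)$, not the damping term $-\d_*\tr(\Si_{\sc 1}\Delta_{\sc 1})$. The correct dual object is a forward one: the closed-loop state covariance.

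That is in fact how the paper proceeds, avoiding the Riccati subtraction altogether: (a) from the convergence in (ii) and the dissipativity \eqref{ly1} of $\Th_{\sc k,\i}$ it deduces the perturbed inequality \eqref{ly2} for the finite-horizon gains $\Th_{\sc k,T}(t,\cd)$, valid for $t\les T-t_0$ with $t_0$ independent of $T$; (b) applying It\^o's formula to $\sum_k\lan\Si_{\sc k}(\a(t))\bar X^0_{\sc k,T}(t),\bar X^0_{\sc k,T}(t)\ran$ along the finite-horizon optimal homogeneous trajectory, and splitting $[s,T-t_0]$ and $[T-t_0,T]$, it obtains $\dbE\sum_k|\bar X^0_{\sc k,T}(t)|^2\les Ke^{-\d_*(t-s)/2}|x|^2$ uniformly in $T$; (c) it concludes by dynamic programming: $\sum_k\dbE\lan P_{\sc k,T}(s,\imath)x_{\sc k},x_{\sc k}\ran$ is the finite-horizon optimal cost, which after adding and subtracting the terminal penalty $\sum_k\dbE\lan P_{\sc k,\i}(\a(T))\bar X^0_{\sc k,T}(T),\bar X^0_{\sc k,T}(T)\ran$ dominates the infinite-horizon value minus $Ke^{-\d_*(T-s)/2}|x|^2$, and combined with $P_{\sc k,T}\les P_{\sc k,\i}$ from (ii) this is exactly \eqref{P-P}. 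Your derivation of \eqref{Q-Q} from \eqref{P-P} coincides with the paper's and is fine; likewise your treatment of the $k=2$ coupling (exponentially small forcing from $\Delta_{\sc 1}$) would go through once the $k=1$ estimate is actually established.
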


\begin{proof}
(i) and (ii) have been proved in \cite{Mei-Wei-Yong-2025}.

(iii). Because of \eqref{P-P}, we know that $\Th_{\sc k,T}(t,\imath)=\Th_{\sc k,T-t}(0,\imath)\rightarrow\Th_{\sc k,\i}(\imath)$ as $T\rightarrow\infty.$ Moreover, there exists $\Si_{\sc k}(\cd):\cM\rightarrow\dbS^n_{++}$ such that 
\begin{align}&\L[\Si_{\sc k}]+(A_{\sc k}+B_{\sc k}\Th_{\sc k,\i})^\top \Si_{\sc k}+\Si_{\sc k}(A_{\sc k}+B_{\sc k}\Th_{\sc k,\i}) \nonumber\\
&\q+(C_{\sc k}+D_{\sc k}\Th_{\sc k,\i})^\top\Si_{\sc 1}(C_{\sc k}+D_{\sc k}\Th_{\sc k,\i})\leq -\d_*\Si_{\sc k}.\label{ly1}\end{align}
Therefore, there exists a $t_0>0$ independent of $T$ such that for 
\begin{align}&\L[\Si_{\sc k}]+(A_{\sc k}+B_{\sc k}\Th_{\sc k,T}(t,\imath))^\top \Si_{\sc k}+\Si_{\sc k}(A_{\sc k}+B_{\sc k}\Th_{\sc k,T}(t,\imath))\nonumber\\
&\q +(C_{\sc k}+D_{\sc k}\Th_{\sc k,T}(t,\imath))^\top\Si_{\sc 1}(C_{\sc k}+D_{\sc k}\Th_{\sc k,T}(t,\imath))\leq -\frac{\d_*}2\Si_{\sc k},\label{ly2}\end{align}
for all $t\in[s,T-t_0] $  and  for all $T>s+t_0$.

Now we consider the homogeneous case of Problem (MF-LQ)$_{\scT}^*$ and Problem (MF-LQ)$_{\sc\i}^*$ and we write $J_{\scT}^0(s,x_{\sc 1}\oplus x_{\sc 2};u_{\sc 1}(\cd)\oplus u_{\sc 2}(\cd))$ and $J_{\sc\i}^0(s,x_{\sc 1}\oplus x_{\sc 2};u_{\sc 1}(\cd)\oplus u_{\sc 2}(\cd))$ by the corresponding cost functionals. Let $(\bar X^0_{\sc 1,T}(\cd),\bar X^0_{\sc 2,T}(\cd))$ be the solution to \eqref{SDEopt} with $v_{\sc k,T}(\cd), b(\cd),\sigma(\cd)=0$ which is the optimal state process for  Problem (MF-LQ)$_{\scT}^*$. Now applying It\^o's formula on 
$$t\mapsto \sum_{k=1}^2\lan \Si_{\sc k}(\a(t)) \bar X^0_{\sc k,T}(t),\bar X^0_{\sc k,T}(t) \ran,$$
\eqref{ly2} yields that  for $t\in[s,T-t_0]$,
\begin{align*}
\frac d{dt}\dbE\sum_{k=1}^2\lan \Si_{\sc k}(\a(t)) \bar X^0_{\sc k,T}(t),\bar X^0_{\sc k,T}(t) \ran\leq -\frac{\d_*}2\dbE\sum_{k=1}^2\lan \Si_{\sc k}(\a(t)) \bar X^0_{\sc k,T}(t),\bar X^0_{\sc k,T}(t) \ran.
\end{align*}
Grownwall's inequality implies that 
\begin{align*}
\dbE\sum_{k=1}^2\lan \Si_{\sc k}(\a(t)) \bar X^0_{\sc k,T}(t),\bar X^0_{\sc k,T}(t) \ran\leq K e^{-\frac{\d_*}2(t-s)}|x|^2, \text{ for }t\in[s,T-t_0].
\end{align*}

For $t\in[T-t_0,T]$, due to the boundedness of $A_{\sc k}, B_{\sc k}, C_{\sc k}, D_{\sc k}, \Th_{\sc k}$, it follows that
\begin{align*}
\frac d{dt}\dbE\sum_{k=1}^2\lan \Si_{\sc k}(\a(t)) \bar X^0_{\sc k,T}(t),\bar X^0_{\sc k,T}(t) \ran\leq K\dbE\sum_{k=1}^2\lan \Si_{\sc k}(\a(t)) \bar X^0_{\sc k,T}(t),\bar X^0_{\sc k,T}(t) \ran.
\end{align*}
Grownwall's inequality implies that 
\begin{align*}
&\dbE\sum_{k=1}^2\lan \Si_{\sc k}(\a(t)) \bar X^0_{\sc k,T}(t),\bar X^0_{\sc k,T}(t) \ran\\
&\leq K e^{K(t-(T-t_0))}\dbE\sum_{k=1}^2\lan \Si_{\sc k}(\a(T-t_0)) \bar X^0_{\sc k,T}(T-t_0),\bar X^0_{\sc k,T}(T-t_0) \ran\\
&\leq K e^{K(t-(T-t_0)}e^{-\frac{\d_*}2(T-t_0-s)}|x|^2\leq Ke^{-\frac{\d_*}2(t-s)}|x|^2,\text{ for }t\in[T-t_0,T].
\end{align*}
This is to say
\begin{align}\label{XTbound}
\dbE\sum_{k=1}^2|\bar X^0_{\sc k,T}(t)|^2\leq Ke^{\frac{\d_*}2(t-s)}|x|^2,\text{ for any }t\in[s,T].
\end{align}

Now let us prove \eqref{P-P}. By the dynamic programming principle and \eqref{XTbound}, we have
\begin{align*}
&\sum_{k=1}^2\dbE\lan P_{\sc k,T}(s,\imath) x_{\sc k},x_{\sc k}\ran= J^0_{\sc T}(s,x_{\sc 1}\oplus x_{\sc 2},\imath; \bar u_{\sc 1,T}(\cd)\oplus\bar u_{\sc 2,T}(\cd))\\
&=J^0_{\sc T}(s,x_{\sc 1}\oplus x_{\sc 2},\imath; \bar u_{\sc 1,T}(\cd)\oplus\bar u_{\sc 2,T}(\cd))+\dbE \sum_{k=1}^2\lan P_{\sc k,\i}(\a(T)) \bar X^0_{\sc k,T}(T),\bar X^0_{\sc k,T}(T)\ran\\
&\q-\dbE \sum_{k=1}^2\lan P_{\sc k,\i}(\a(T)) \bar X_{\sc k,T}(T),\bar X^0_{\sc k,T}(T)\ran\\
&\geq J^0_{\sc \i}(s,x_{\sc 1}\oplus x_{\sc 2},\imath; \bar u_{\sc 1,\i}(\cd)\oplus\bar u_{\sc 2,\i}(\cd))-Ke^{-\frac{\d_*}2(T-s)}|x|^2\\
&=\sum_{k=1}^2\dbE\lan P_{\sc k,\i}(s,\imath) x_{\sc k},x_{\sc k}\ran-Ke^{-\frac{\d_*}2(T-s)}|x|^2
\end{align*}
By the arbitrariness of $x_1\oplus x_2\in L^2_{\cF_s}(\dbR^n)$, we have \eqref{P-P}. \eqref{Q-Q} follows from the definition of $\Th_{\sc k, T}$ and $\Th_{\sc k,\i}$ and the uniform boundedness of $R_{\sc k}+D_k^\top P_{\sc 1,T}D_k>0$ from below.

\end{proof}

\subsection{BSDEs}
In this subsection, we will consider the asymptotic behavior of $v_{\sc k,T}(\cd)$. The main effort is devoted to studying  BSDEs \eqref{BSDE-Y10} as $T\rightarrow\i$.
Write 
$$\xi(t):=\dbE\(|b(t)|^2+|\si(t)|^2+|q(t)|^2+|r(t)|^2+|\bar q(t)|^2+|\bar r(t)|^2\).$$
We need the following assumption to study the asymptotic behavior the BSDE \eqref{BSDE-Y10}.\ms

\noindent{\bf (A3)}  It follows that 
\begin{equation}\label{int<i}\sup_{r\in[0,\i)}\int_0^\i e^{-{\d_*\over4}|r-t|}\xi(t)dt<\i.\end{equation}
\ms

Let $T\rightarrow\infty$ in \eqref{BSDE-Y10}, it is natural to arise the following BSDE over $[0,\i).$

\begin{align}\label{BSDE-INT}\begin{cases}&d\eta_{\sc1,\i }=\z _{\sc\i}dW+\z_{\sc1,\i }^MdM-\Big((A_{\sc 1}^{\Th_{\sc 1,\i}})^\top \eta_{\sc 1,\i}+(C_{\sc 1}^{\Th_{\sc 1,\i}})^\top\Pi_{\sc 1}[\zeta_{\sc\i}]\)dt\\
&\qq\qq-\(P_{\sc 1,\i}b_{\sc 1}+(C_{\sc 1}^{\Th_{\sc 1,\i}})^\top P_{\sc 1,\i}\si_{\sc 1}+q_{\sc 1}+\Th_{\sc 1}^\top r_{\sc 1}\Big)dt ,\\
& d\eta_{\sc2,\i }=\z_{\sc2,\i }^MdM- (A_{\sc 2}^{\Th_{\sc 2,\i}})^\top \eta_{\sc 2,\i}dt\\
&\qq\qq-\((C_{\sc 2}^{\Th_{\sc 2,\i}})^\top\Pi_{\sc 2}[\z_{\sc\i}]+P_{\sc 2,\i}b_{\sc 2}+(C_{\sc 2}^{\Th_{\sc 2}})^\top P_{\sc 1,\i}\si_{\sc 2}+q_{\sc 2}+\Th_{\sc 2}^\top r_{\sc 2}\Big)dt.\end{cases}\end{align}
We have the following proposition.

\begin{proposition}\label{propBSDE}
{\rm (i)} The BSDE \eqref{BSDE-INT} admits a unique solution solution $$(\eta_{\sc1,T}(\cd),\zeta_{\scT}(\cd),\z^M_{\sc1,T}(\cd))\in L_{\dbF^\a}^{2}(0,T;\dbR^n)^\perp\times L_{\dbF}^2(0,T;\dbR^n)\times M_{\dbF^\a_-}^2(0,\i;\dbR^n)^\perp$$ and
$$(\eta_{\sc2,T}(\cd), \z^M_{\sc2,T}(\cd))\in L_{\dbF^\a}^{2}(0,T;\dbR^n)\times M_{\dbF^\a_-}^{2}(0,T;\dbR^n).$$

{\rm (ii)} The BSDE \eqref{BSDE-INT} admits a unique solution solution $$(\eta_{\sc1,\i}(\cd),\zeta_{\sc\i}(\cd),\z^M_{\sc1,\i}(\cd))\in L_{\dbF^\a}^{2,loc}(0,\i;\dbR^n)^\perp\times L_{\dbF}^{2,loc}(0,\i;\dbR^n)\times M_{\dbF^\a_-}^{2,loc}(0,\i;\dbR^n)^\perp$$ and
$$(\eta_{\sc2,\i}(\cd), \z^M_{\sc2,\i}(\cd))\in L_{\dbF^\a}^{2,loc}(0,\i;\dbR^n)\times M_{\dbF^\a_-}^{2,loc}(0,\i;\dbR^n).$$
Here $L_{\dbH}^{2,loc}(0,\i;\dbR^n)=\cap_{T>0}L_{\dbH}^{2}(0,T;\dbR^n), M_{\dbH_-}^{2,loc}(0,\i;\dbR^n)=\cap_{T>0}M_{\dbH_-}^{2}(0,T;\dbR^n)$, for $\dbH=\dbF,\dbF^\a$.\ms

{\rm (iii)}
For $k=1,2$, we have
\begin{align}&\sum_{k=1}^2\(\dbE|\eta_{\sc k,T}(t)|^2+\dbE\int_t^T e^{-{\d_*\over4}(s-t)}\sum_{\jmath\neq\imath}\lambda_{\imath\jmath}|\z^M_{\sc k,T}(s,\jmath)|^2{\bf 1}_{[\a(s)=\imath]}ds\nonumber\\
&\q+\dbE\int_t^T e^{-{\d_*\over4}(s-t)}|\z_{\sc k,T}(s)|^2ds\)\les K\int_t^T e^{-{\d_*\over4}(s-t)}\xi(s)ds.\label{stabsde}
\end{align}
It also holds that
\begin{align}
&\sum_{k=1}^2\(\dbE|\eta_{\sc k,T}(t)-\eta_{\sc k,\i}(t)|^2+\dbE\int_t^{T}e^{-{\d_*\over4}(s-t)}\sum_{\jmath\neq\imath}\lambda_{\imath\jmath}|\z^M_{\sc k,T}(s,\jmath)-\z^M_{\sc k,\i}(s,\jmath)|^2{\bf 1}_{[\a(s)=\imath]}ds\nonumber\\
&\q+\dbE\int_t^{T}
e^{-{\d_*\over4}(s-t)}|\z_{\sc k,T}(s)-\z_{\sc k,\i}(s)|^2ds\)\les Ke^{-{\d_*\over8}(T-t)}\int_t^{\sc\i} e^{-{\d_*\over4}(s-t)}\xi(s)ds.\label{E[eta-eta]}
\end{align}

{\rm (iv)} The following are true.

\begin{align}
&\dbE\int_0^t e^{-{\d_*\over4}(t-s)}[|\z_{\sc k,T}(s)|^2+|\z_{\sc k,\i}(s)|^2]dt\les K\int_0^\i\3n e^{-{\d_*\over4}|t-s|}\xi(s)ds,\label{festimate1}\\
&\dbE\int_0^t e^{-{\d_*\over4}(t-s)}|\z_{ k,\scT}(s)-\z_{\sc k,i}(s)|^2ds\les K e^{-\frac\d 8(T-t)}\int_0^\i  e^{-{\d_*\over4}|t-s|}\xi(s)ds,\label{festimate2}\\
&\dbE\int_0^te^{-\frac\d4(t-s)}| v_{\sc k,T}(s)-v_{\sc k,\i}(s)|^2 ds\les Ke^{-{\d_*\over8}(T-t)}\int_0^\i e^{-{\d_*\over4}|t-s|}\xi(s)ds\label{v-v},\\
&\dbE\int_0^T|\zeta_{\sc k,T}(s)|^2+|\zeta_{\sc k,\i}(s)|^2dt\les K\int_0^T
\xi(s)ds+K (T+1)\sup_{s\ges 0}\int_{0}^\infty e^{-\frac\d 4|s-r|}\xi(r)dr,\label{XXb}
\\
&\dbE\int_0^T|\zeta_{\sc k,T}(s)-\zeta_{\sc k,\i}(s)|^2ds\les K\sup_{s\ges 0}\int_{0}^\infty e^{-\frac\d 4|s-r|}\xi(r)dr,\label{XXb2}\\
&\dbE\big[|\bar X^{x,\imath}_{\sc k,T}(t)|^2+|\bar X^{x,\imath}_{\sc k,\i}(t)|^2\big]\les K \(e^{-{\d_*\over2}t}|x|^2+\int_0^\i e^{-{\d_*\over4}|t-s|}\xi(s)ds\).\label{boundEXTX}
\end{align}
\end{proposition}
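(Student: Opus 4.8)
The plan is to hang everything on a single weighted energy inequality for the linear BSDE system \eqref{BSDE-Y10}, built from a Lyapunov functional with the matrices $\Si_{\sc 1},\Si_{\sc 2}$ furnished by the dissipativity in \eqref{ly1}--\eqref{ly2}; parts (i)--(ii) and the whole list in (iv) are then consequences of this estimate together with the Riccati bounds \eqref{P-P}--\eqref{Q-Q}. For (i), under (A1), (A2)$'$ and (A3) the gains $\Th_{\sc k,T}$ are uniformly bounded (since $R_{\sc k}+D_{\sc k}^\top P_{\sc 1,T}D_{\sc k}$ is bounded below and $P_{\sc k,T}$ bounded), hence the closed-loop coefficients $(A_{\sc k}^{\Th_{\sc k,T}})^\top,(C_{\sc k}^{\Th_{\sc k,T}})^\top$ are bounded and the free terms $\varphi_{\sc k,T}(\cd,\a(\cd))$ lie in $L^2_\dbF$ because $|\varphi_{\sc k,T}|^2\les K\xi$ and $\xi\in L^1(0,T)$ by (A1)(3). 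Thus \eqref{BSDE-Y10} is a linear BSDE driven by $W$ and by the martingale measure $M$ with square-integrable data, and the existence--uniqueness theory for such equations (as in \cite{Mei-Wei-Yong-2025}) gives (i). Part (ii) follows by a limiting argument once (iii) is in hand: for $T'<T$ the difference $\eta_{\sc k,T}-\eta_{\sc k,T'}$ solves a BSDE on $[0,T']$ with zero forcing and terminal value $-\eta_{\sc k,T}(T')$, whose size is controlled by the pointwise form of \eqref{stabsde}; the weighted estimate then shows $\{(\eta_{\sc k,T},\z_{\scT},\z^M_{\sc k,T})\}_T$ is Cauchy in the local spaces, and passing to the limit produces the unique solution of \eqref{BSDE-INT}.

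For the master estimate \eqref{stabsde} I would apply It\^o's formula to
$$s\mapsto e^{-{\d_*\over4}(s-t)}\sum_{k=1}^2\lan\Si_{\sc k}(\a(s))\eta_{\sc k,T}(s),\eta_{\sc k,T}(s)\ran$$
on $[t,T]$. The $dW$ and $d\lan M\ran$ quadratic-variation contributions reproduce exactly the weighted $\z_{\scT}$ and $\z^M_{\sc k,T}$ terms on the left of \eqref{stabsde}, while the generator term $\L[\Si_{\sc k}]$ coming from the jumps of $\a(\cd)$ combines with the drift generated by $(A_{\sc k}^{\Th_{\sc k,T}})^\top\eta_{\sc k,T}+(C_{\sc k}^{\Th_{\sc k,T}})^\top\Pi_{\sc k}[\z_{\scT}]$ to reconstruct precisely the left-hand side of \eqref{ly2}; its surplus $-\tfrac{\d_*}{2}\Si_{\sc k}$ dominates the $+\tfrac{\d_*}{4}$ produced by differentiating the weight. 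The inhomogeneity is absorbed by Young's inequality, $2\lan\eta_{\sc k,T},\varphi_{\sc k,T}\ran\les\e|\eta_{\sc k,T}|^2+\e^{-1}|\varphi_{\sc k,T}|^2$ with $|\varphi_{\sc k,T}|^2\les K\xi$. Using $\eta_{\sc k,T}(T)=0$, taking expectations (which removes the martingale parts), and a Gronwall argument yields \eqref{stabsde}, and in particular the pointwise bound $\dbE|\eta_{\sc k,T}(s)|^2\les K\int_s^Te^{-{\d_*\over4}(r-s)}\xi(r)dr$.

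The convergence estimate \eqref{E[eta-eta]} is the main obstacle. The difference $\eta_{\sc k,T}-\eta_{\sc k,\i}$ solves a BSDE with two inhomogeneous sources: a nonzero terminal value $-\eta_{\sc k,\i}(T)$, whose size is $\dbE|\eta_{\sc k,\i}(T)|^2\les K\int_T^\i e^{-{\d_*\over4}(r-T)}\xi(r)dr$ by the pointwise form of \eqref{stabsde}; and a forcing term built from the coefficient mismatches $\Th_{\sc k,T}-\Th_{\sc k,\i}$ and $P_{\sc k,T}-P_{\sc k,\i}$, which decay like $e^{-\d_*(T-s)}$ by \eqref{P-P}--\eqref{Q-Q} and multiply the only-integrally-controlled processes $\eta_{\sc k,\i},\z_{\sc\i}$. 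Re-running the weighted Lyapunov estimate, the terminal contribution already decays at rate $\d_*/4$, while the forcing contribution is handled by splitting the integration interval $[t,T]$ at its midpoint, assigning half of the decay $e^{-\d_*(T-s)}$ to strengthen the weight and invoking \eqref{stabsde} for the remaining integrated norm of $\z_{\sc\i}$; the slower of the two resulting rates, $e^{-{\d_*\over8}(T-t)}$, is what survives in \eqref{E[eta-eta]}. The delicate point is that $\z_{\scT}-\z_{\sc\i}$ enters the drift only through the projections $\Pi_{\sc k}$ and is not controlled a priori, so it must be recovered from the left-hand side of the same inequality simultaneously with $\eta_{\sc k,T}-\eta_{\sc k,\i}$ rather than estimated beforehand.

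Finally, (iv) is assembled from (iii) and the Riccati bounds. Estimates \eqref{festimate1}--\eqref{festimate2} are the backward-weighted counterparts of \eqref{stabsde}, obtained from the pointwise bounds on $\eta_{\sc k,T},\eta_{\sc k,\i}$ by Fubini together with the integrability \eqref{int<i}. Estimate \eqref{v-v} follows by inserting the convergence of $P$, $\eta$ and $\z$ into the definition \eqref{optimal strategy} of $v_{\sc k,T}$ and its infinite-horizon analogue, so that $v_{\sc k,T}-v_{\sc k,\i}$ is a linear combination of $P_{\sc k,T}-P_{\sc k,\i}$, $\eta_{\sc k,T}-\eta_{\sc k,\i}$, $\z_{\scT}-\z_{\sc\i}$ and the exponentially small coefficient errors. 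The unweighted bounds \eqref{XXb}--\eqref{XXb2} come from integrating the weighted ones over $[0,T]$, the factor $(T+1)$ reflecting the accumulation of the uniform supremum in \eqref{int<i}. And \eqref{boundEXTX} is a forward stability estimate for \eqref{SDEopt}, proved with the same Lyapunov matrices $\Si_{\sc k}$ used in the Riccati proposition, with the inputs $v_{\sc k,T},b_{\sc k},\si_{\sc k}$ controlled through \eqref{v-v} and \eqref{int<i}.
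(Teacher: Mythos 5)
Your overall strategy is genuinely different from the paper's. The paper never re-derives any BSDE estimates: it removes the mean-field terms by introducing the auxiliary systems \eqref{BSDE-INT2}--\eqref{BSDE-INT3} (whose first equations are ordinary linear BSDEs; the second equations then treat $\check\zeta_{\scT}$, resp.\ $\check\z_{\sc\i}$, as known data), observes that applying the projections $\Pi_1,\Pi_2$ to their solutions produces the solutions of \eqref{BSDE-Y10} and \eqref{BSDE-INT}, and then imports Propositions 3.5 and 3.7 of \cite{Mei-Wang-Yong-2025b} wholesale. A self-contained weighted-energy proof like yours is legitimate in principle, and your instinct about the coupling is right: $\Pi_{\sc 2}[\z_{\scT}]$ in the second equation must be recovered simultaneously from the quadratic-variation term of the first equation, using that $\Pi_k$ is an $L^2$-contraction.

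However, there is a genuine gap at the heart of your master estimate. Applying It\^o's formula to $s\mapsto e^{-\d_*(s-t)/4}\sum_k\lan\Si_{k}(\a(s))\eta_{k,T}(s),\eta_{k,T}(s)\ran$ does \emph{not} reconstruct the left-hand side of \eqref{ly2}. For the backward equation $d\eta_k=-\big((A_k^{\Th_k})^\top\eta_k+(C_k^{\Th_k})^\top\Pi_k[\z_{\scT}]+\varphi_k\big)dt+(\cds)dW+(\cds)dM$, the drift enters the energy identity with the opposite sign from the forward case, so the quadratic form acting on $\eta_k$ is
\begin{equation*}
\L[\Si_k]-\Si_k(A_k^{\Th_k})^\top-A_k^{\Th_k}\Si_k,
\end{equation*}
not $\L[\Si_k]+(A_k^{\Th_k})^\top\Si_k+\Si_k A_k^{\Th_k}+(C_k^{\Th_k})^\top\Si_1C_k^{\Th_k}$; moreover the $C^\top\Si_1C$ term does not appear as a coefficient at all (in the BSDE computation the second-order contribution is the quadratic variation $\lan\Si_1\z_{\scT},\z_{\scT}\ran$, which sits on the good side of the inequality). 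Because both the sign of the $A$-terms relative to $\L[\Si_k]$ and the placement of the transposes are flipped, the dissipativity \eqref{ly2} gives no control on this form, and your claim that the surplus $-\tfrac{\d_*}{2}\Si_k$ dominates the weight's $+\tfrac{\d_*}{4}$ has no basis as written. Two standard repairs exist: (a) run the estimate with the \emph{inverse} weights $\Si_k^{-1}$ --- conjugating \eqref{ly2} by $\Si_k^{-1}$ then gives the correct sign on the $A$-terms, and the jump generator is handled by the matrix inequality $B+B^{-1}\ges 2I$ with $B=\Si_k(\imath)^{-1/2}\Si_k(\jmath)\Si_k(\imath)^{-1/2}$, which yields $\Si_k^{-1}\L[\Si_k]\Si_k^{-1}+\L[\Si_k^{-1}]\ges0$; or (b) prove the decay by duality, pairing $\eta_{k,T}$ against the exponentially stable closed-loop forward flow. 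Either repair requires real work that your sketch omits, and \eqref{stabsde}, \eqref{E[eta-eta]}, \eqref{v-v} and all of (iv) hang on it.
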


\begin{proof} The results here are parallel to Proposition 3.5 and Proposition 3.7 in \cite{Mei-Wang-Yong-2025b} where the key difference lies in the mean-field terms in \eqref{BSDE-Y10} and \eqref{BSDE-INT}.
To tackle this, we will propose two BSDEs without mean-field terms so that the the solutions to \eqref{BSDE-Y10} and \eqref{BSDE-INT} can be repsented using mappings $\Pi_1$ and $\Pi_2.$

We consider the following two BSDEs:
\begin{align}\label{BSDE-INT2}\begin{cases}&d\check\eta_{\sc1,T}(t)=\check\z_{\scT}dW+\check\z_{\sc1,T}^MdM \\
&\q-\Big((A_{\sc 1}^{\Th_{\sc1,T}})^\top \check\eta_{\sc 1,T}+(C_{\sc 1}^{\Th_{\sc1,T}})^\top\check\zeta_{\scT} +P_{\sc 1,T}b_{\sc 1}+(C_{\sc 1}^{\Th_{\sc 1,\i}})^\top P_{\sc 1,T}\si_{\sc 1}+q_{\sc 1}+\Th_{\sc 1}^\top r_{\sc 1}\Big)dt\\
&d\check\eta_{\sc2,T }=\check\z_{\sc2,T }^MdM-(A_{\sc 2}^{\Th_{\sc 2,T}})^\top \check\eta_{\sc 2,T}\\
&\q-\Big( (C_{\sc 2}^{\Th_{\sc 2,T}})^\top\Pi_{\sc 2}[\check\z_{\scT}]
+P_{\sc 2,T}b_{\sc 2}+(C_{\sc 2}^{\Th_{\sc 2}})^\top P_{\sc 1,T}\si_{\sc 2}+q_{\sc 2}+\Th_{\sc 2,T}^\top r_{\sc 2}\Big)dt
 \end{cases}
\end{align}
and 
\begin{align}\label{BSDE-INT3}\begin{cases} &d\check\eta_{\sc1,\i }=\check\z _{\sc\i}dW+\check\z_{\sc1,\i }^MdM\\
&\q -\Big((A_{\sc 1}^{\Th_{\sc 1,\i}})^\top \check\eta_{\sc 1,\i}+(C_{\sc 1}^{\Th_{\sc 1,\i}})^\top\check\zeta_{\sc\i}+P_{\sc 1,\i}b_{\sc 1}+(C_{\sc 1}^{\Th_{\sc 1,\i}})^\top P_{\sc 1,\i}\si_{\sc 1}+q_{\sc 1}+\Th_{\sc 1}^\top r_{\sc 1}\Big)dt.\\
&d\check\eta_{\sc2,\i }=\check\z_{\sc2,\i }^MdM-(A_{\sc 2}^{\Th_{\sc 2,\i}})^\top \check\eta_{\sc 2,T}\\
&\qq-\Big((C_{\sc 2}^{\Th_{\sc 2,\i}})^\top\Pi_{\sc 2}[\check\z_{\sc\i}]
+P_{\sc 2,\i}b_{\sc 2}+(C_{\sc 2}^{\Th_{\sc 2}})^\top P_{\sc 1,\i}\si_{\sc 2}+q_{\sc 2}+\Th_{\sc 2}^\top r_{\sc 2}\Big)dt\\
\end{cases}
\end{align}
For \eqref{BSDE-INT2}, we will solve the first BSDE first. Seeing $\check\zeta_{\scT}(\cdot)$ as a  given stochastic process, one then proceed with the second BSDE. In this case, \eqref{BSDE-INT2} is essentially a regular BSDE without mean-field terms.  The similar idea can be applied to \eqref{BSDE-INT3}. Observed from this, we are allowed to apply the previous results  in \cite{Mei-Wang-Yong-2025b} on \eqref{BSDE-INT2} and \eqref{BSDE-INT3}.

By Proposition 3.5 in \cite{Mei-Wang-Yong-2025b}, \eqref{BSDE-INT2} and \eqref{BSDE-INT3} admits a unique solution
\begin{align*}
&(\check\eta_{\sc1,T}(\cd),\check\zeta_{\scT}(\cd),\check\z^M_{\sc1,T}(\cd))\in L_{\dbF}^{2}(0,T;\dbR^n)\times L_{\dbF}^{2}(0,T;\dbR^n)\times M_{\dbF_-}^{2}(0,T;\dbR^n),\\
&(\check\eta_{\sc2,T}(\cd), \check\z^M_{\sc2,T}(\cd))\in L_{\dbF^\a}^{2}(0,T;\dbR^n)\times M_{\dbF^\a_-}^{2}(0,T;\dbR^n),\\
&(\check\eta_{\sc1,\i}(\cd),\check\zeta_{\sc\i}(\cd),\check\z^M_{\sc1,\i}(\cd))\in L_{\dbF}^{2,loc}(0,\i;\dbR^n)\times L_{\dbF}^{2,loc}(0,\i;\dbR^n)\times M_{\dbF_-}^{2,loc}(0,\i;\dbR^n),\\
&(\check\eta_{\sc2,\i}(\cd), \check\z^M_{\sc2,\i}(\cd))\in L_{\dbF^\a}^{2,loc}(0,\i;\dbR^n)\times M_{\dbF^\a_-}^{2,loc}(0,\i;\dbR^n).\end{align*}
Through the orthogonal decomposition, it can be verified that 
\begin{align*}
&(\Pi_1[\check\eta_{\sc1,T}](\cd),\check\zeta_{\scT}(\cd),\Pi_1[\check\z^M_{\sc1,T}](\cd))\in L_{\dbF}^{2}(0,T;\dbR^n)\times L_{\dbF}^{2}(0,T;\dbR^n)\times M_{\dbF_-}^{2}(0,T;\dbR^n),\\
&(\check\eta_{\sc2,T}(\cd), \check\z^M_{\sc2,T}(\cd))\in L_{\dbF^\a}^{2}(0,T;\dbR^n)\times M_{\dbF^\a_-}^{2}(0,T;\dbR^n)\end{align*}
is the solution to \eqref{BSDE-Y10} and 
\begin{align*}
&(\Pi_1[\check\eta_{\sc1,\i}](\cd),\check\zeta_{\sc\i}(\cd),\Pi_1[\check\z^M_{\sc1,\i}](\cd))\in L_{\dbF}^{2,loc}(0,\i;\dbR^n)\times L_{\dbF}^{2,loc}(0,\i;\dbR^n)\times M_{\dbF_-}^{2,loc}(0,\i;\dbR^n),\\
&(\check\eta_{\sc2,\i}(\cd), \check\z^M_{\sc2,\i}(\cd))\in L_{\dbF^\a}^{2,loc}(0,\i;\dbR^n)\times M_{\dbF^\a_-}^{2,loc}(0,\i;\dbR^n).\end{align*}
 is the solution to \eqref{BSDE-INT}. Provided the estimates in  Proposition 3.7 in  \cite{Mei-Wang-Yong-2025b},   \eqref{festimate1}-\eqref{boundEXTX} hold. The proof is complete.
\end{proof}

\section{Strong Turnpike Property}\label{sec:stp}
Now we are ready to prove our main results on the strong turnpike property. Without loss of generality, we assume $s=0$ in the sequel.
Recall that the optimal state process for Problem (MF-LQ)$_{\scT}$ satisfies
\begin{align}\label{optsys1}
\begin{cases}
& d\bar X_{\sc 1,T}(t)=\Big[(A_{\sc 1}+B_{\sc 1}\Th_{\sc 1,T }) \bar X_{\sc 1,T}+B_{\sc 1}v_{\sc 1,T}+b_{\sc 1}\Big]dt\\
&	 \q+\Big[(C_{\sc 1}+D_{\sc 1}\Th_{\sc 1,T}) \bar X_{\sc 1,T}+(C_{\sc 2}+D_{\sc 2}\Th_{\sc 2,T})\bar X_{\sc 2}+D_{\sc 1}v_{\sc 1,T}+D_{\sc 2}v_{\sc 2,T}+\si\Big]dW,\\
& d\bar X_{\sc 2,T}(t)=\Big[(A_{\sc 2}+B_{\sc 2}\Th_{\sc 2,T})\bar X_{\sc 2,T}+B_{\sc 2}v_{\sc 2,T}\Big]dt, \q t\in[0,T],\\
& \bar X_{\sc 1,T}(0)=x_{\sc 1},\q \bar X_{\sc 2,T}(0)=x_{\sc 2}, \q \a(0)=\imath.\end{cases}
\end{align}

To define the limit process, we consider the following  control 
\begin{align}\label{optintcont}\bar u_{\sc k,\i}(t)=\Th_{\sc k,\i}(\a(t)) \bar X_{\sc k,\i}(t)+v_{\sc k,\i}(t,\a(t)).\end{align}
Then the state process $\bar X_{\sc 1,\i}(\cd)\oplus \bar X_{\sc 2,\i}(\cd)$ satisfies the following SDE
\begin{align}\label{optsys2}
\begin{cases}
& d\bar X_{\sc 1,\i}(t)=\Big[(A_{\sc 1}+B_{\sc 1}\Th_{\sc 1,\i }) \bar X_{\sc 1,\i}+B_{\sc 1}v_{\sc 1,\i}+b_{\sc 1}\Big]dt\\
&	 \q+\Big[(C_{\sc 1}+D_{\sc 1}\Th_{\sc 1,\i}) \bar X_{\sc 1,\i}+(C_{\sc 2}+D_{\sc 2}\Th_{\sc 2,\i})\bar X_{\sc 2}+D_{\sc 1}v_{\sc 1,\i}+D_{\sc 2}v_{\sc 2,\i}+\si\Big]dW,\\
& d\bar X_{\sc 2,\i}(t)=\Big[(A_{\sc 2}+B_{\sc 2}\Th_{\sc 2,\i})\bar X_{\sc 2,\i}+B_{\sc 2}v_{\sc 2,\i}\Big]dt, \q t\in[0,\i),\\
& \bar X_{\sc 1,\i}(0)=x_{\sc 1},\q \bar X_{\sc 2,\i}(0)=x_{\sc 2}, \q \a(0)=\imath.\end{cases}
\end{align}

We will first present  our main result on the Turnpike property in the paper. Then we will verify the optimality of the  control in \eqref{optintcont} for Problem (MF-LQ)$_{\sc \i}$ or Problem (MF-LQ)$_{\sc E}$ under different assumptions.
\begin{theorem}\label{mainthm} Suppose (A1), (A2) and (A3) hold. Then there exist absolute constants $\beta, K>0$ independent of $(t,T)$ such that 
 \begin{align*}
&\sum_{k=1}^2\dbE\Big(|\bar X_{\sc k,T}^{0,x,\imath}(t)-\bar X_{\sc k,\i}^{0,x,\imath}(t)|^2+\int_0^t e^{-\frac{\d_*}4(t-r)}|\bar u_{\sc k,T}^{0,x,\imath}(r)-\bar u_{\sc k,\i}^{0,x,\imath}(r)|^2dr\Big)\\
&\q\les 
Ke^{-\frac {\d_*} 8(T-t)}\(e^{-\frac{\d_*}4t}|x|^2+\int_0^\i e^{-{\d_*\over4}|t-r|}\xi(r)dr\),\end{align*}
for all  $t\in[0,T]$.
\end{theorem}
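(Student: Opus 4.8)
The plan is to reduce to the stabilized formulation and then run a Lyapunov/Gronwall argument on the difference of the two optimal state processes, feeding in the already-established convergence rates for the Riccati data \eqref{P-P}--\eqref{Q-Q} and for the BSDE data \eqref{v-v}, \eqref{boundEXTX}. First I would reduce to (A2)': by the Remark following Theorem \ref{theorem1} the closed-loop control \eqref{optimal control} is independent of the choice of stabilizer $(\widehat\Th_{\sc 1},\widehat\Th_{\sc 2})$, so after the feedback shift we may assume $(0,0)$ is a stabilizer and that the preceding Riccati proposition and Proposition \ref{propBSDE} apply verbatim. Writing $\Delta X_{\sc k}=\bar X_{\sc k,T}-\bar X_{\sc k,\i}$ and $\Delta u_{\sc k}=\bar u_{\sc k,T}-\bar u_{\sc k,\i}$, I would record the pointwise control decomposition coming directly from \eqref{optimal control} and \eqref{optintcont},
$$\Delta u_{\sc k}(t)=\Th_{\sc k,\i}(\a(t))\Delta X_{\sc k}(t)+\big(\Th_{\sc k,T}(t,\a(t))-\Th_{\sc k,\i}(\a(t))\big)\bar X_{\sc k,T}(t)+\big(v_{\sc k,T}(t)-v_{\sc k,\i}(t)\big).$$
Substituting this into the difference of \eqref{optsys1} and \eqref{optsys2}, the $b_{\sc k},\si$ terms cancel and $(\Delta X_{\sc 1},\Delta X_{\sc 2})$ solves the homogeneous closed-loop system driven by $(\Th_{\sc 1,\i},\Th_{\sc 2,\i})$ plus forcing terms (in both drift and diffusion) of the form $B_{\sc k}(\Th_{\sc k,T}-\Th_{\sc k,\i})\bar X_{\sc k,T}$, $D_{\sc k}(\Th_{\sc k,T}-\Th_{\sc k,\i})\bar X_{\sc k,T}$ and $B_{\sc k}(v_{\sc k,T}-v_{\sc k,\i})$, $D_{\sc k}(v_{\sc k,T}-v_{\sc k,\i})$, with $\Delta X_{\sc 1}(0)=\Delta X_{\sc 2}(0)=0$.

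The core estimate on the state is obtained by applying It\^o's formula to $t\mapsto\sum_{k}\lan\Si_{\sc k}(\a(t))\Delta X_{\sc k}(t),\Delta X_{\sc k}(t)\ran$, where $\Si_{\sc k}$ is the Lyapunov matrix from \eqref{ly1} associated with the \emph{limiting} feedback $(\Th_{\sc 1,\i},\Th_{\sc 2,\i})$, which is a stabilizer by part (i) of the preceding proposition. This is exactly the computation leading to \eqref{XTbound}, except that now there is a nonzero forcing: \eqref{ly1} produces a decay factor $e^{-\frac{\d_*}{2}(t-r)}$ and the cross terms between $\Delta X$ and the forcing are absorbed by Young's inequality, giving
$$\dbE|\Delta X_{\sc k}(t)|^2\les K\int_0^t e^{-\frac{\d_*}{2}(t-r)}\sum_{j=1}^2\Big(\dbE\big|(\Th_{\sc j,T}-\Th_{\sc j,\i})\bar X_{\sc j,T}(r)\big|^2+\dbE\big|v_{\sc j,T}(r)-v_{\sc j,\i}(r)\big|^2\Big)\,dr.$$
Into the right-hand side I would insert the Riccati rate \eqref{Q-Q}, the uniform state bound \eqref{boundEXTX}, and the BSDE estimate \eqref{v-v}.

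The remaining work is the bookkeeping of exponential weights, which I expect to be the main (though routine) obstacle. The $\Th$-forcing carries a factor $e^{-2\d_*(T-r)}$, which I would split as $e^{-2\d_*(T-t)}e^{-2\d_*(t-r)}$: the $|x|^2$ part integrates elementarily, while the $\xi$ part is handled by the convolution inequality
$$\int_0^t e^{-a(t-r)}\Big(\int_0^\i e^{-\frac{\d_*}{4}|r-s|}\xi(s)\,ds\Big)dr\les C\int_0^\i e^{-\frac{\d_*}{4}|t-s|}\xi(s)\,ds,\qquad a>\tfrac{\d_*}{4},$$
which follows from Fubini and a one-dimensional exponential integral (here $a=\tfrac{5\d_*}{2}$). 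Since $e^{-2\d_*(T-t)}\les e^{-\frac{\d_*}{8}(T-t)}$ and the $v$-term already appears with the correct weight in \eqref{v-v}, this yields $\dbE|\Delta X_{\sc k}(t)|^2\les Ke^{-\frac{\d_*}{8}(T-t)}\big(e^{-\frac{\d_*}{4}t}|x|^2+\int_0^\i e^{-\frac{\d_*}{4}|t-s|}\xi(s)\,ds\big)$.

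Finally, for the control error I would multiply the pointwise decomposition of $\Delta u_{\sc k}$ by $e^{-\frac{\d_*}{4}(t-r)}$, integrate, and bound the three resulting integrals: the $\Th_{\sc k,\i}\Delta X_{\sc k}$ term by the just-proved state bound (again via the convolution inequality, now with $a=\tfrac{3\d_*}{8}>\tfrac{\d_*}{4}$), the $(\Th_{\sc k,T}-\Th_{\sc k,\i})\bar X_{\sc k,T}$ term by \eqref{Q-Q} and \eqref{boundEXTX}, and the $v_{\sc k,T}-v_{\sc k,\i}$ term directly by \eqref{v-v}; summing over $k=1,2$ gives the claim. The only genuine subtlety is that the diffusion of $\Delta X_{\sc 1}$ couples in $(C_{\sc 2}+D_{\sc 2}\Th_{\sc 2,\i})\Delta X_{\sc 2}$, so the It\^o/Lyapunov step must be performed for the sum over $k$ simultaneously, exactly as in the proof of the Riccati proposition, so that this cross contribution is controlled by the strict margin in \eqref{ly1}.
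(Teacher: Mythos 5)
Your proposal is correct and follows essentially the same route as the paper: form the difference SDE (whose forcing terms are exactly the $B_{\sc k}(\Th_{\sc k,T}-\Th_{\sc k,\i})\bar X_{\sc k,T}$, $D_{\sc k}(\cdot)$ and $v_{\sc k,T}-v_{\sc k,\i}$ terms you list), apply It\^o's formula to a quadratic Lyapunov form of the difference summed over $k=1,2$, invoke the dissipativity \eqref{ly1} together with \eqref{Q-Q}, \eqref{v-v}, \eqref{boundEXTX} and Gronwall, and then estimate the control error through the same pointwise decomposition of $\Delta u_{\sc k}$. The only cosmetic deviations — using $\Si_{\sc k}$ rather than $P_{\sc k,\i}$ as the Lyapunov weight, attaching the $\Th$-difference to $\bar X_{\sc k,T}$ instead of $\bar X_{\sc k,\i}$, and making the exponential convolution inequality explicit — do not change the argument (and your handling of the $\xi$-contribution in the Gronwall step is in fact more careful than the paper's).
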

\begin{proof}
In the proof, the top index $(0,x,\imath)$, $t$ and $\a(t)$ are suppressed. By \eqref{optsys1} and \eqref{optsys2}, it follows that
\begin{align*}
&d(\bar X_{\sc 1,T}(t)-\bar X_{\sc 1,\i}(t))=\Big[(A_{\sc 1}+B_{\sc 1}\Th_{\sc1,\i}) (\bar X_{\sc1,T}-\bar X_{\sc1,\i})+B_1(v_{\sc1,T}-v_{\sc1,\i})\Big]dt\\
&\q+\Big[(C_{\sc 1}+D_{\sc 1}\Th_{\sc1,\i})(\bar X_{\sc1,T}-\bar X_{\sc1,\i})+(C_{\sc 2}+D_{\sc 2}\Th_{\sc 2,\i})(\bar X_{\sc2,T}-\bar X_{\sc2,\i})\]dW\\
&\qq+B_{\sc 1}(\Th_{\sc1,T}-\Th_{\sc1,\i})\bar X_{\sc1,T}dt+\Big[D_{\sc 1}(\Th_{\sc1,T}-\Th_{\sc1,\i})\bar X_{\sc1,T}+D_{\sc 2}(\Th_{\sc 2,T}-\Th_{\sc2,\i})\bar X_{\sc2,T} \Big]dW,\\
&\qq+\Big[D_{\sc 1}(v_{\sc 1,T}-v_{\sc 1,\i})+D_{\sc 2}(v_{\sc 2,T}-v_{\sc 2,\i})\Big]dW\\
&d(\bar X_{\sc 2,T}(t)-\bar X_{\sc 2,\i}(t))=\[(A_{\sc 2}+B_{\sc 2}\Th_{\sc2,\i}) (\bar X_{\sc2,T}-\bar X_{\sc2,\i})\]dt\\
&\qq+\Big[B_{\sc 2}(\Th_{\sc2,T}-\Th_{\sc2,\i})\bar X_{\sc2,T}+B_{\sc 2}(v_{\sc 2,T}-v_{\sc 2,\i})\]dt
\end{align*}
The applying It\^o's formula on 
$$t\mapsto \sum_{k=1}^2\lan P_{\sc k,\i} (\a(t))(\bar X_{\sc k,T}(t)-\bar X_{\sc k,\i}(t)),\bar X_{\sc k,T}(t)-\bar X_{\sc k,\i}(t)\ran,$$
\eqref{Q-Q} and \eqref{ly1} yield that 
\begin{align*}
&\frac {d}{dt}\dbE \sum_{k=1}^2\lan P_{\sc k,\i}(\a(t)) (\bar X_{\sc k,T}(t)-\bar X_{\sc k,\i}(t)),\bar X_{\sc k,T}(t)-\bar X_{\sc k,\i}(t)\ran\\
&\leq -\frac{\d_*}2\dbE \sum_{k=1}^2\lan P_{\sc k,\i}(\a(t)) (\bar X_{\sc k,T}(t)-\bar X_{\sc k,\i}(t)),\bar X_{\sc k,T}(t)-\bar X_{\sc k,\i}(t)\ran\\
&\qq+Ke^{-\frac {\d_*}4(T-t)}\sum_{k=1}^2\dbE|\bar X_{\sc k,T}(t)|^2+K\sum_{k=1}^2\dbE|v_{\sc k,T}-v_{\sc k,\i}|^2
\end{align*}
Using \eqref{v-v}, Grownwall's inequality implies that
\begin{align*}
&\dbE \sum_{k=1}^2|\bar X_{\sc k,T}(t)-\bar X_{\sc k,\i}(t))|^2\\
&\leq K\dbE \sum_{k=1}^2\lan P_{\sc k,\i}(\a(t)) (\bar X_{\sc k,T}(t)-\bar X_{\sc k,\i}(t)),\bar X_{\sc k,T}(t)-\bar X_{\sc k,\i}(t)\ran\\
&\leq K\int_0^te^{-\frac{\d_*}2 (t-r)}\Big(e^{-\frac {\d_*}4(T-r)}\sum_{k=1}^2\dbE|\bar X_{\sc k,T}(r)|^2+\dbE|v_{\sc k,T}(r)-v_{\sc k,\i}(r)|^2\Big)dr\\
&\leq K\int_0^te^{-\frac{\d_*}2 (t-r)}e^{-\frac {\d_*}4(T-r)} e^{-\frac{\d_*}2 (r-s)}|x|^2dr+e^{-{\d_*\over8}(T-t)}\int_0^\i e^{-{\d_*\over4}|t-r|}\xi(r)dr\\
&\leq K
e^{-\frac{\d_*} 4(T-t)}\(e^{-\frac{\d_*}4(t-s)}|x|^2+\int_0^\i e^{-{\d_*\over4}|t-r|}\xi(r)dr\).
\end{align*}
Using \eqref{v-v} again, we have
\begin{align*}
&\sum_{k=1}^2\dbE\int_0^te^{-\frac{\d_*}4(t-r)}|\bar u_{\sc k,T}(r)-\bar u_{\sc k,\i}(r))|^2dr\\
&\leq \sum_{k=1}^2\dbE\int_0^te^{-\frac{\d_*}4(t-r)}|\Th_{\sc k,T}(\a(r))|^2|\bar X_{\sc k,T}(r)-\bar X_{\sc k,\i}(r))|^2dr\\
&\q+\sum_{k=1}^2\dbE\int_0^te^{-\frac{\d_*}4(t-r)}|\Th_{\sc k,T}(\a(r))-\Th_{\sc k,\i}(\a(r))|^2|\bar X_{\sc k,\i}(r))|^2dr\\
&\q+K\sum_{k=1}^2\dbE\int_0^te^{-\frac{\d_*}4(t-r)}|v_{\sc k,T}(r,\a(r))-v_{\sc k,\i}(r,\a(r))|^2dr\\
&\leq Ke^{-\frac{\d_*} 4(T-t)}e^{-\frac{\d_*}4t}|x|^2+Ke^{-\frac{\d_*} 2(T-t)}e^{-\frac{\d_*}2t}|x|^2+Ke^{-{\d_*\over8}(T-t)}\int_0^\i e^{-{\d_*\over4}|t-s|}\xi(s)ds\\
&\leq Ke^{-\frac {\d_*} 8(T-t)}\(e^{-\frac{\d_*}4t}|x|^2+\int_0^\i e^{-{\d_*\over4}|t-r|}\xi(r)dr\).
\end{align*}
The proof is complete.

\end{proof}

Until now, we have proved the strong Turnpike property for Problem (MF-LQ)$_{\scT}$ as $T\rightarrow\i.$ One can see that the key of the process lies in deriving the control strategy \eqref{optintcont} by studying the convergence of Riccati equations in \eqref{ARE00} and BSDEs in \eqref{BSDE-Y10}. With appropriate assumptions, one can conclude those two systems converge to \eqref{ARE00int} and \eqref{BSDE-INT} respectively. 
Now the rest of this section aims  to  examine the optimality  $\bar u_{\sc1,\i}(\cd)\oplus\bar u_{\sc2,\i}(\cd)$. We will see that $\bar u_{\sc1,\i}(\cd)\oplus\bar u_{\sc2,\i}(\cd)$ is the optimal control  for either Problem (MF-LQ)$_{\sc\i}^*$ or Problem (MF-LQ)$_{\sc E}^*$ under different assumptions. We have two different cases.
\ms

\noindent{ \it Integrable Case.} Instead of (A3), we assume the following: \ms

{\bf (IC).} $b(\cd),\si(\cd),q(\cd),\bar q(\cd)\in L_{\dbF}^2(0,\i;\dbR^n),\q r(\cd),\bar r(\cd)\in L_{\dbF}^2(0,\i;\dbR^m).$
\ms

It is obvious that (IC) is stronger than (A3). Therefore, all the previous results hold in such a case. Recall the Problem (MF-LQ)$_{\sc\i}$ (or Problem (MF-LQ)$^*_{\sc\i}$ equivalently). By \cite{Mei-Wei-Yong-2025}, we directly have the following proposition.

\begin{proposition}Under (A1), (A2) and (LC),
$(\bar X_{\sc\i}^{0,x,\imath}(\cd),\bar u_{\sc\i}^{0,x,\imath}(\cd))$ is the unique optimal pair for Problem (MF-LQ)$_{\sc\i}$.
\end{proposition}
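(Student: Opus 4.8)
The plan is to prove optimality by a direct verification (completion-of-squares) argument anchored on the stationary Riccati system \rf{ARE00int} and the infinite-horizon BSDE \rf{BSDE-INT}, rather than by a limiting argument on the finite-horizon optimizers. Since (IC) is strictly stronger than (A3), Proposition \ref{propBSDE} and all of the convergence estimates remain available. A preliminary observation is that the candidate pair $(\bar X_{\sc\i}^{0,x,\imath}(\cd),\bar u_{\sc\i}^{0,x,\imath}(\cd))$ built from \rf{optintcont}--\rf{optsys2} is itself admissible: under (IC) the weight $\xi(\cd)$ is integrable on $[0,\i)$, so Fubini applied to \rf{boundEXTX} gives $\dbE\int_0^\i|\bar X_{\sc k,\i}(t)|^2dt<\i$ for $k=1,2$, whence $\bar u_{\sc\i}\in\cU_{ad}^{0,x,\imath}[0,\i)$ and $J_{\sc\i}(\bar u_{\sc\i})<\i$.

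The core computation is as follows. Fix any competitor $u_{\sc 1}(\cd)\oplus u_{\sc 2}(\cd)\in\cU_{ad}^{0,x,\imath}[0,\i)$ with trajectory $X_{\sc 1}(\cd)\oplus X_{\sc 2}(\cd)$ solving \rf{SDE1ho}, and apply It\^o's formula on $[0,T]$ to
\begin{align*}
\Phi(t)=\sum_{k=1}^2\Big(\lan P_{\sc k,\i}(\a(t))X_{\sc k}(t),X_{\sc k}(t)\ran+2\lan\eta_{\sc k,\i}(t),X_{\sc k}(t)\ran\Big).
\end{align*}
The regime-switching generator terms $\L[P_{\sc k,\i}]$ and the martingale-measure integrals $\z^M_{\sc k,\i}dM$ are absorbed exactly by the corresponding terms in \rf{ARE00int} and \rf{BSDE-INT}, while the $dW$-diffusion produces the $C_{\sc k}^\top P_{\sc 1,\i}C_{\sc k}$ couplings. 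After taking expectations and substituting these algebraic identities, the drift of $\Phi$ reorganizes into (minus) the running integrand of $J_{\sc T}$, a $u$-independent scalar $c_{\sc T}$ controlled by $\xi(\cd)$, and a nonnegative quadratic remainder. Writing $\widehat R_{\sc k}=R_{\sc k}+D_{\sc k}^\top P_{\sc 1,\i}D_{\sc k}$, the resulting identity reads
\begin{align*}
J_{\sc T}(u)=\Phi(0)-\dbE\,\Phi(T)+c_{\sc T}+\sum_{k=1}^2\dbE\int_0^T\lan\widehat R_{\sc k}\big(u_{\sc k}-\bar u_{\sc k,\i}\big),u_{\sc k}-\bar u_{\sc k,\i}\ran dt,
\end{align*}
with $\bar u_{\sc k,\i}=\Th_{\sc k,\i}X_{\sc k}+v_{\sc k,\i}$ the feedback--feedforward law \rf{optintcont}; the constraint $\widehat R_{\sc k}>0$ from \rf{ARE00int} makes the remainder nonnegative and strictly convex in $u_{\sc k}-\bar u_{\sc k,\i}$.

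It remains to let $T\to\i$. Admissibility gives $X_{\sc k}(\cd)\in L^2_{\dbF}(0,\i;\dbR^n)$, hence $\liminf_{T\to\i}\dbE|X_{\sc k}(T)|^2=0$; I would therefore extract $T_j\to\i$ with $\dbE|X_{\sc k}(T_j)|^2\to0$. Along $T_j$, boundedness of $P_{\sc k,\i}$ annihilates $\dbE\lan P_{\sc k,\i}X_{\sc k}(T_j),X_{\sc k}(T_j)\ran$, and the decay $\dbE|\eta_{\sc k,\i}(T)|^2\les K\int_T^\i e^{-\frac{\d_*}4(s-T)}\xi(s)ds\to0$ (from \rf{stabsde} and $\xi\in L^1$) controls the cross term by Cauchy--Schwarz, so $\dbE\,\Phi(T_j)\to0$. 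Passing to this subsequence, using (IC) to ensure $c_{\sc T_j}\to c_{\sc\i}$ and $J_{\sc T_j}(u)\to J_{\sc\i}(u)$, yields
\begin{align*}
J_{\sc\i}(u)=\Phi(0)+c_{\sc\i}+\sum_{k=1}^2\dbE\int_0^\i\lan\widehat R_{\sc k}\big(u_{\sc k}-\bar u_{\sc k,\i}\big),u_{\sc k}-\bar u_{\sc k,\i}\ran dt.
\end{align*}
Since $\Phi(0)+c_{\sc\i}$ depends only on the fixed data $x$, $P_{\sc k,\i}$, $\eta_{\sc k,\i}(0)$, evaluating at $u=\bar u_{\sc\i}$ (where the remainder vanishes) identifies it as $J_{\sc\i}(\bar u_{\sc\i})$; hence $J_{\sc\i}(u)\ges J_{\sc\i}(\bar u_{\sc\i})$ for every admissible $u$, with equality iff $u_{\sc k}=\bar u_{\sc k,\i}$ a.e. for $k=1,2$, by $\widehat R_{\sc k}>0$. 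This is optimality together with uniqueness.

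I expect the delicate point to be precisely the boundary-term analysis under the weak $L^2$-admissibility of competitors: since one only has $\liminf_T\dbE|X(T)|^2=0$ rather than a genuine limit, the subsequence extraction must be coupled with the integrable decay of $\eta_{\sc k,\i}$ and the uniform bound \rf{boundEXTX}, and one must verify that the scalar terms $c_{\sc T}$ converge, which is exactly where $\xi\in L^1$ enters. The orthogonal splitting into the $k=1,2$ components must be respected at every step so that the two Riccati/BSDE systems decouple as in \rf{SDE1ho}; this is the feature separating the present mean-field verification from the non--mean-field one in \cite{Mei-Wang-Yong-2025b}.
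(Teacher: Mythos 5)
Your proof is essentially sound, but it takes a genuinely different route from the paper: the paper offers no argument at all for this proposition, deferring entirely to the companion work \cite{Mei-Wei-Yong-2025} ("By \cite{Mei-Wei-Yong-2025}, we directly have the following proposition"), whereas you give a self-contained verification via completion of squares anchored on \rf{ARE00int} and \rf{BSDE-INT}. What your approach buys is transparency about exactly where (IC) enters: admissibility of the candidate pair (Fubini on \rf{boundEXTX} with $\xi\in L^1$), vanishing of the boundary terms $\dbE\,\Phi(T_j)$ along a subsequence, and convergence of the $u$-independent scalar $c_{\scT}$; the subsequence extraction from $\liminf_{T\to\i}\dbE|X(T)|^2=0$ is precisely the right device for competitors that are merely $L^2$-admissible. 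What the paper's citation buys is brevity and consistency, since the closed-loop theory for Problem (MF-LQ)$_{\sc\i}$ (existence, uniqueness, and the identification of the optimal feedback as $\Th_{\sc k,\i}$, $v_{\sc k,\i}$) is developed there once and reused. Three points in your write-up deserve tightening. First, the two subsystems do \emph{not} decouple as you assert at the end: the diffusion of $X_{\sc 1}$ in \rf{SDE1ho} contains $C_{\sc 2}X_{\sc 2}+D_{\sc 2}u_{\sc 2}$, which is why $P_{\sc 1,\i}$ appears inside the $k=2$ equation of \rf{ARE00int} (through $R_{\sc 2}+D_{\sc 2}^\top P_{\sc 1,\i}D_{\sc 2}$, etc.); the completion of squares must be carried out jointly over $k=1,2$, and it is exactly the structure of \rf{ARE00int} that makes the cross terms cancel. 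Second, admissibility of the candidate also requires $v_{\sc k,\i}\in L^2_{\dbF}(0,\i;\dbR^m)$, hence unweighted $L^2(0,\i)$ bounds on $\eta_{\sc k,\i}$ and $\zeta_{\sc\i}$; the paper only states exponentially weighted estimates (\rf{stabsde}, \rf{festimate1}), so you should note that integrating \rf{stabsde} in $t$ and using Fubini with $\xi\in L^1$ yields the needed bound. Third, for uniqueness, the quadratic remainder vanishes iff $u_{\sc k}=\Th_{\sc k,\i}X_{\sc k}+v_{\sc k,\i}$ with $X_{\sc k}$ the \emph{competitor's own} trajectory; to conclude $u=\bar u_{\sc\i}$ you must invoke uniqueness of solutions to the closed-loop SDE \rf{optsys2}, which identifies $X$ with $\bar X_{\sc\i}$ and then $u$ with $\bar u_{\sc\i}$. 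None of these is a fatal gap, but all three belong in a complete write-up.
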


In this case, it follows that  $\bar X_{\sc\i}^{0,x,\imath}(\cd)\in L^2_{\dbF}(0,\i;\dbR^n)$ and therefore we call such a case by integrable case.\ms

\noindent{\it Non-Integrable Case.}
In addition to (A3), we further assume the following\ms 

\noindent {\bf (LIC).} 
\begin{equation}\label{boundzeta}\limsup_{T\rightarrow\infty}\frac1T\int_0^T\xi(t)dt<\infty.\end{equation}

In this case, we can verify that $\bar u_{\sc\i}^{0,x,\imath}(\cd)=\bar u_{\sc1,\i}^{0,x,\imath}(\cd)\oplus\bar u_{\sc2,\i}^{0,x,\imath}(\cd)$ is the optimal control of Problem (MF-LQ)$_{\sc E}^*$ as follows.

\begin{proposition} Suppose (A1), (A2), (A3) and (LIC) hold. For any $(0,x,\imath)\in\cD$, $\bar    u^{0,x,\imath}_{\sc\infty}(\cd)$ is the optimal control and $\bar X^{0,x,\imath}_{\sc\infty}(\cd)$ is the corresponding optimal trajectory   for Problem (MF-LQ)$_{\sc E}$.
Moreover, $J_{\sc E}(0,x,\imath;\bar u^{0,x,\imath}_{\sc\infty}(\cd))$ is finite.
\end{proposition}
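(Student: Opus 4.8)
The plan is to verify two things: that the ergodic cost $J_{\sc E}(0,x,\imath;\bar u^{0,x,\imath}_{\sc\i}(\cd))$ is finite, and that it is minimal over all $u(\cd)\in\cU_{loc}[0,\i)$. Write $\bar u_{\sc\i}=\bar u_{\sc 1,\i}\oplus\bar u_{\sc 2,\i}$ and $\bar X_{\sc\i}$ for the limit pair \eqref{optsys2}, and $\bar u_{\sc T},\bar X_{\sc T}$ for the finite-horizon optimal pair \eqref{optsys1}. Since $\bar u_{\sc k,\i}=\Th_{\sc k,\i}\bar X_{\sc k,\i}+v_{\sc k,\i}$ with $\bar X_{\sc k,\i},v_{\sc k,\i}\in L^{2}_{\dbF}(0,T;\dbR^m)$ for every finite $T$, we have $\bar u_{\sc\i}\in\cU_{loc}[0,\i)$, so $\bar u_{\sc\i}$ is indeed admissible for Problem (MF-LQ)$_{\sc E}$.

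For finiteness I would estimate $\frac1T J_{\sc T}(0,x,\imath;\bar u_{\sc\i})$ directly. By boundedness of the coefficients, Young's inequality, and $\bar u_{\sc k,\i}=\Th_{\sc k,\i}\bar X_{\sc k,\i}+v_{\sc k,\i}$, one gets $|J_{\sc T}(0,x,\imath;\bar u_{\sc\i})|\les K\sum_k\dbE\int_0^T(|\bar X_{\sc k,\i}|^2+|v_{\sc k,\i}|^2+|q_k|^2+|r_k|^2)dt$. The state integral is $O(T)$ by integrating \eqref{boundEXTX} together with the (A3) bound $\sup_t\int_0^\i e^{-\frac{\d_*}4|t-r|}\xi(r)dr<\i$. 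Substituting the formula \eqref{optimal strategy} for $v_{\sc k,\i}$ and bounding it through $\eta_{\sc k,\i},\zeta_{\sc\i},\si,r$, the $\eta$- and $\zeta$-contributions are $O(T)$ by \eqref{stabsde} and \eqref{XXb}, where the factor $\int_0^T\xi(t)dt$ occurring in \eqref{XXb} (and in the $\si,r$-terms) is $O(T)$ precisely because of the time-averaged bound (LIC). Hence $\frac1T|J_{\sc T}(0,x,\imath;\bar u_{\sc\i})|\les K$ for all large $T$, so $J_{\sc E}(0,x,\imath;\bar u_{\sc\i})$, being a $\liminf$ of a bounded sequence, is finite.

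For optimality the key estimate is $J_{\sc T}(0,x,\imath;\bar u_{\sc\i})-V_{\scT}(0,x,\imath)=O(1)$ uniformly in $T$. As $J_{\sc T}$ is a uniformly convex quadratic functional on $\cU[0,T]$ (this is where (A3) enters) with minimizer $\bar u_{\sc T}$, completing the square gives $J_{\sc T}(0,x,\imath;\bar u_{\sc\i})-V_{\scT}(0,x,\imath)=\frac12 a_T(\delta u,\delta u)$, where $\delta u=\bar u_{\sc\i}-\bar u_{\sc T}$ and $a_T$ is the Hessian (homogeneous) quadratic form, whose associated homogeneous state is $\bar X_{\sc\i}-\bar X_{\sc T}$ (valid because $\bar X_{\sc\i}$ and $\bar X_{\sc T}$ share the same initial datum and forcing $(b,\si)$). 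Bounding the coefficients, $\frac12 a_T(\delta u,\delta u)\les K\sum_k\int_0^T\big(\dbE|\bar X_{\sc k,\i}-\bar X_{\sc k,T}|^2+\dbE|\bar u_{\sc k,\i}-\bar u_{\sc k,T}|^2\big)dt$. The state integral is uniformly bounded: Theorem \ref{mainthm} gives $\dbE|\bar X_{\sc k,T}(t)-\bar X_{\sc k,\i}(t)|^2\les Ke^{-\frac{\d_*}8(T-t)}(|x|^2+1)$, whose integral over $[0,T]$ converges uniformly in $T$.

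The delicate point — and the main obstacle — is the control integral $\int_0^T\dbE|\bar u_{\sc k,\i}-\bar u_{\sc k,T}|^2dt$, since Theorem \ref{mainthm} controls only the exponentially weighted quantity $\int_0^t e^{-\frac{\d_*}4(t-r)}|\bar u_{\sc k,T}-\bar u_{\sc k,\i}|^2dr$. I would split $[0,T]$ into unit intervals $[j-1,j]$: on each the weight $e^{-\frac{\d_*}4(t-r)}$ is bounded below, so evaluating Theorem \ref{mainthm} at $t=j$ yields $\int_{j-1}^j\dbE|\bar u_{\sc k,T}-\bar u_{\sc k,\i}|^2dr\les Ke^{-\frac{\d_*}8(T-j)}(|x|^2+1)$, and summing the resulting geometric series over $j$ gives a bound independent of $T$. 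This establishes $J_{\sc T}(0,x,\imath;\bar u_{\sc\i})-V_{\scT}(0,x,\imath)=O(1)$. Optimality is then immediate: for any $u(\cd)\in\cU_{loc}[0,\i)$ its restriction to $[0,T]$ belongs to $\cU[0,T]$, so $J_{\sc T}(0,x,\imath;u)\ges V_{\scT}(0,x,\imath)\ges J_{\sc T}(0,x,\imath;\bar u_{\sc\i})-K$; dividing by $T$ and passing to $\liminf_{T\to\i}$ gives $J_{\sc E}(0,x,\imath;u)\ges J_{\sc E}(0,x,\imath;\bar u_{\sc\i})$, completing the proof.
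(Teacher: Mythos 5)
Your proof is correct, and its key mechanism differs from the paper's. The paper invokes the optimality of $\bar u_{\scT}$ only once, via $J_{\scT}(u)\ges J_{\scT}(\bar u_{\scT})$, and then compares $J_{\scT}(\bar u_{\scT})$ with $J_{\scT}(\bar u_{\sc\i})$ through a first-order Cauchy--Schwarz perturbation bound: the difference of the two quadratic costs is dominated by $\sum_k\int_0^T\big(\dbE[|\d X_k|^2+|\d u_k|^2]\big)^{1/2}\big(\dbE[1+|\bar X_{\sc k,T}|^2+|\bar X_{\sc k,\i}|^2+|\bar u_{\sc k,T}|^2+|\bar u_{\sc k,\i}|^2]\big)^{1/2}dt$, which is shown to be $o(T)$ using the Ces\`aro-averaged turnpike estimates \eqref{bdinT} (vanishing average of the differences, bounded average of the magnitudes). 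You instead exploit the optimality of $\bar u_{\scT}$ a second time, through the exact completion-of-square identity $J_{\scT}(\bar u_{\sc\i})-V_{\scT}=a_{\scT}(\d u,\d u)$ with $a_{\scT}$ the homogeneous quadratic form, so no cross terms with the magnitudes of the optimal pairs appear at all; you then convert the exponentially weighted control estimate of Theorem \ref{mainthm} into an unweighted one by the unit-interval splitting and geometric summation, obtaining the stronger conclusion that the optimality gap $J_{\scT}(\bar u_{\sc\i})-V_{\scT}$ is $O(1)$ uniformly in $T$ (the paper only gets $o(T)$, which suffices for the ergodic criterion). What each buys: your route is structurally cleaner, needs the magnitude bounds \eqref{boundEXTX}, \eqref{stabsde}, \eqref{XXb} and assumption (LIC) only for the finiteness claim, and quantifies how suboptimal $\bar u_{\sc\i}$ is on every finite horizon; the paper's route is more elementary in that it never uses the Hilbert-space quadratic structure of $J_{\scT}$ beyond bilinearity, and so would survive perturbations that destroy exact quadraticity. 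Both arguments rest on the same two pillars, Theorem \ref{mainthm} and Proposition \ref{propBSDE}, and your finiteness argument (direct estimation of $\tfrac1T J_{\scT}(\bar u_{\sc\i})$ through $v_{\sc k,\i}$, $\eta_{\sc k,\i}$, $\zeta_{\sc\i}$, with (LIC) controlling $\int_0^T\xi$) is a more detailed version of the paper's one-line appeal to the uniform boundedness of $\dbE|\bar X_{\sc k,\i}(\cd)|^2$ and \eqref{bdinT}.
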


\begin{proof} Without loss generality, we assume (A2)' instead of (A2). We also suppress the top index $(0,x,\imath)$ in the proof.
From \eqref{festimate1}--\eqref{boundEXTX}, it follows that
\begin{align}\label{bdinT}&\limsup_{T\rightarrow\infty}\frac1T\dbE\int_0^T|\bar u_{\sc1,T}(t)|^2+|\bar u_{\sc1,\i}(t)|^2dt<\i\text{ and }\lim_{T\rightarrow\infty}\frac1T\dbE\int_0^T|\bar u_{\sc1,T}(t)-\bar u_{\sc1,\i}(t)|^2dt=0.\end{align}

 Next, we see
\begin{align*}
&J_{\sc E}(0,x_1\oplus x_2,\imath;u_1(\cd)\oplus u_2(\cd))\ges J_{\sc E}(0,x_1\oplus x_2,\imath;\bar u_{\sc 1,\i}(\cd)\oplus \bar u_{\sc 2,\i})\\
&\q-\liminf_{T\rightarrow\infty}\frac KT \sum_{k=1}^2\int_0^ T\Big(\dbE[|\bar X_{\sc k,T}(t)-\bar X_{\sc k,\i}(t)|^2+|\bar u_{\sc k,T}(t)-\bar u_{\sc k,\i}(t)|^2]\\
&\qq\qq\qq\cdot\dbE[1+|\bar X_{\sc k,T}(t)|^2+|\bar X_{\sc k,\i}(t)|^2+|\bar u_{\sc k,T}(t)|^2+|\bar u^{x,\imath}_{\ k,\i}(t)|^2]\Big)^{\frac12}dt\\
&\q-\frac KT\liminf_{T\rightarrow\infty} \sum_{k=1}^2\int_0^ T\Big(\dbE[|\bar X_{\sc k,T}(t)-\bar X_{\sc k,\i}(t)|^2+|\bar u_{\sc k,T}(t)-\bar u_{\sc k,\i}(t)|^2]\Big)^{\frac12}\\
&\qq\qq\qq\cdot\(\dbE[1+|\bar X_{\sc k,T}(t)|^2+|\bar X_{\sc k,\i}(t)|^2+| \bar u_{\sc k,\infty}(t)|^2+| \bar u_{\sc k,T}(t)|^2]\)^{\frac12}dt.
\end{align*}
Taking $T\rightarrow\infty$, it follows that for any $u(\cd)\in\cU_{loc}[0,\infty)$,
\begin{align*}&J_{\sc E}(0,x_1\oplus x_2,\imath;u_1(\cd)\oplus u_2(\cd))\ges\liminf_{T\rightarrow\infty}\frac1T J_{\sc E}(0,x_1\oplus x_2,\imath;\bar u_{\sc1,T}(\cd)\oplus \bar u_{\sc2,T}(\cd))\\
&=J_{\sc E}(0,x_1\oplus x_2,\imath;\bar u_{\sc1,\i}(\cd)\oplus \bar u_{\sc2,\i}(\cd)). \end{align*}
Moreover, the uniform boundedness of $\dbE|\bar X_{\sc k,\i}(\cd)|^2$ and \eqref{bdinT} together imply that $J_{\sc E}(0,x_1\oplus x_2,\imath;\bar u_{\sc\i}(\cd)\oplus \bar u_{\sc2,\i}(\cd))$ is finite. Moreover, $\bar u_{\sc1,\i}(\cd)\oplus \bar u_{\sc2,\i}(\cd)$ is the optimal control process in $\cU_{loc}[0,\i)$ and $\bar X_{\sc 1,\i}(\cd)\oplus \bar X_{\sc 2,\i}(\cd)$ is the corresponding  trajectory for Problem (MF-LQ)$_{\sc E}$.

\end{proof}

In this case, it follows that   $\bar X_{\sc\i}^{0,x,\imath}(\cd)\in L^{2,loc}_{\dbF}(0,\i;\dbR^n)$ and therefore we call such a case by local-integrable case.

\section{Concluding Remarks}\label{sec:con}

In this paper, we obtained the turnpike property for mean-field LQ optimal control in an infinite horizon with a regime-switching state. To work with the mean-field terms, an orthogonal decomposition method is introduced.  Based on the integrability of the non-homogeneous terms over the infinite horizon, we prove that the limit process verifies two different types of optimalities: integrable cases and local-integrable case.  The idea in the paper is applicable  in future works on the strong turnpike property for the equilibrium strategies for LQ two-player games with mean-field interactions. We hope to report those results in future works.

\ms

\section{Declaration} The authors claim there is no conflict of interests. The authors did not use any  generative AI or AI-assisted technologies in the writing process of this paper.

\end{document}